\documentclass{article}
\usepackage[utf8]{inputenc}
\usepackage[margin=1in]{geometry}
\usepackage{amsmath}
\usepackage{amsfonts}
\usepackage{amssymb}
\usepackage{amsthm}
\usepackage{esint}
\usepackage{mathtools}
\usepackage{array,makecell,longtable}
\usepackage{titling}
\usepackage{authblk}
\usepackage{commath}
\usepackage{xcolor}
\usepackage{graphicx}
\usepackage[toc,title,page]{appendix}
\usepackage{subcaption}
\usepackage{bbm}
\usepackage{hyperref,cite}
\usepackage{color,soul}
\usepackage{bm}
\usepackage{mathrsfs}

\usepackage{tikz}
\usepackage{enumitem}
\newtheorem{remark}{Remark}[section]
\newtheorem{theorem}[remark]{Theorem}
\newtheorem{corollary}[remark]{Corollary}
\newtheorem{lemma}[remark]{Lemma}

\numberwithin{equation}{section}
\allowdisplaybreaks

\begin{document}

\title{Identifying unknown time delay and spatially varying coefficients in a reaction-diffusion equation from boundary measurements}

\author{Ming-Hui Ding\thanks{School of Mathematics and Statistics, Northwestern Polytechnical University, Xi'an, Shaanxi Province, China\\ Email address: dingmh@nwpu.edu.cn}, \, Hongyu Liu\thanks{Department of Mathematics, City University of Hong Kong, Hong Kong SAR, China\\ Email address: hongyu.liuip@gmail.com, hongyliu@cityu.edu.hk} \, and Catharine W.K. Lo\thanks{School of Mathematical Sciences, Shenzhen University, Shenzhen, Guangdong Province, China\\ Email address: catharinelowk@gmail.com, cwklo@szu.edu.cn} \vspace{-0.5cm}}

\date{}
\maketitle

\begin{abstract}
This work investigates an inverse problem for a general class of linear reaction-diffusion systems incorporating multiple delayed contributions, namely retarded diffusion, retarded time derivatives, and retarded source terms. The objective is to simultaneously recover the unknown time lag $\tau>0$ and spatially heterogeneous coefficients from boundary flux measurements alone. The recovery strategy exploits the singular temporal behavior generated by an incompatibility between the prescribed initial history and the boundary data. In contrast to prior inverse problems for delay equations, which assume $\tau$ known and are confined to ODE or abstract settings, our approach operates in a parabolic PDE framework with spatially varying coefficients. Once $\tau$ is identified, we establish a Lipschitz stability estimate via a Carleman inequality, in the case of time-independent coefficients $p=p(x)$, $q=q(x)$. This is the first result to simultaneously recover an unknown delay and spatially dependent coefficients in a delayed parabolic PDE from boundary data.

\medskip
\noindent\textbf{Keywords:} Inverse problems; time-delay identification; reaction-diffusion equations; parabolic PDEs; Carleman estimates; boundary measurements; parameter identification.

\medskip
\noindent\textbf{2020 Mathematics Subject Classification:} 35R30, 35K57, 35K20, 35B30, 92B05.
\end{abstract}

\section{Introduction}

\subsection{Background Motivation and Mathematical Model}

Time-delay effects are ubiquitous in mathematical models of biological and physical processes, arising whenever the current state depends on information from an earlier instant—as in maturation lags in population dynamics \cite{DDESIAPBacteria, DDEAML, LinHsuWolkowicz2024JMBDDEPopulation}, incubation periods in epidemiology and disease transmission \cite{XuWangMoghadas2023JMBDDEPopulation, GoelBhatiaTripathi2024JMBDDEEpidemic, YangZouHsu2026JMBDDEDisease, ChengWang2025JMBDDDEDiseaseNonlocal, ZhangWang2024JMBDDEMalariaDisease, LouZhao2024JMBDDEMalaria, Zheng2024JMBDDEDisease}, propagation delays in neural systems \cite{DDEM3ASNeural, DDECommPDENeuron}, age-structured dynamics \cite{DDECommPDEPopulation}, and feedback latencies in genetic regulation \cite{GedeonHumphriesMackey2025JMBDDEGenetic}. 
At the PDE level, such mechanisms manifest as retarded arguments in both temporal and spatial differential operators\cite{faria2006nonmonotone,chen2016stability,mei2009traveling,wang2006travelling}, leading to systems of the form
\[
\partial_t u = \mathcal{L}u + \mathcal{L}_\tau u(t-\tau) + F(u(t), u(t-\tau)),
\]
where $\mathcal{L}$ and $\mathcal{L}_\tau$ are spatial differential operators. These delays fundamentally alter the qualitative behavior of solutions -- inducing oscillations, destabilizing equilibria, and generating spatiotemporal patterns absent in their instantaneous counterparts.

This work addresses an inverse problem for a general class of linear reaction–diffusion systems with multiple delayed contributions, including retarded diffusion, retarded time derivatives, and retarded source terms. The objective is to recover both the unknown time lag $\tau>0$ and spatially heterogeneous coefficients from boundary-accessible observations alone. Specifically, we consider: 
\begin{equation}\label{eq:mainDPDE}
\begin{cases}
    c_1 \partial_t u(x,t) + c_2 \partial_t u(x,t-\tau) \\\quad= d_1 \Delta u(x,t) + d_2 \Delta u(x,t-\tau) \\\qquad+ p(x,t) u(x,t) + q(x,t) u(x,t-\tau) &\quad \text{in } Q_T: = \Omega \times (0,T)\\
    u(x,t) = h(x,t) &\quad \text{on } \partial\Omega \times (-\tau, T),\\
    u(x,t) = u_0(x,t) &\quad \text{in } \Omega \times [-\tau, 0].
\end{cases}
\end{equation}
where $\Omega\subset\mathbb R^n$ is a bounded domain with $C^2$ boundary, $T>0$, $0<\tau>0$ is unknown, $c_1,c_2,d_1,d_2\ge0$ are fixed constants satisfying nondegeneracy conditions, and $p,q$ are unknown coefficients. The prescribed history $u_0$ and boundary data $h$ are known. 

For the forward problem to be well-posed, we impose the following regularity assumptions:
\begin{equation}\label{eq:mainregassump}
u_0 \in L^2(-\tau,0; H^2(\Omega)) \cap H^1(-\tau,0; L^2(\Omega)),\quad h \in H^{3/2,3/4}(\Sigma_T),\quad p, q \in C^1([0,T]; L^\infty(\Omega))
\end{equation} 
where 
\(\Sigma_T:=\partial\Omega\times(0,T).\)
The coefficients are further required to satisfy the nondegeneracy conditions 
\begin{equation}\label{eq:CoefAssump}
    c_1 + c_2, c_1+d_1, c_2+d_1, d_1+d_2>0.
\end{equation}
Further details are discussed in Section \ref{subsec:parreg} and as part of the proof of the main result in Section \ref{subsec:methodofsteps}.

Before proceeding, we must specify the compatibility and regularity conditions that underpin our singularity-based identification strategy. Assume further that the following compatibility conditions hold:
\begin{enumerate}
    \item Boundary Compatibility: The initial history and boundary data are compatible at \( t = 0 \):
\[
u_0(x,0) = h(x,0) \quad \text{on } \partial\Omega.
\]
When higher temporal regularity is needed, we impose the corresponding conditions
\[
\partial_t^k u_0(x,0) = \partial_t^k h(x,0) \quad \text{on } \partial\Omega, \quad k = 0, 1, 2.
\]
    \item Regularity: The boundary data satisfy
\[
h \in H^3(-\tau, T; H^{3/2}(\partial\Omega)).
\]
The initial history satisfies
\[
u_0 \in H^3(-\tau, 0; H^2(\Omega)).
\]
\end{enumerate}
These assumptions ensure that the relevant temporal traces and boundary quantities are well defined—a prerequisite for the subsequent singularity analysis.

The key condition for identifying the delay is an incompatibility that occurs at the transition from the prescribed history to the solution generated by the equation. Let \( u^{(0)} \) denote the solution constructed on the first time interval \( [0,\tau] \), and define 
\[
\Phi_1(x) := \partial_t u^{(0)}(x,0) - \partial_t u_0(x,0).
\]
We assume
\begin{enumerate}
    \item[3.] Incompatibility: The initial history is incompatible at \( t=0 \), meaning:
\[
\Phi_1(x) \not\equiv 0 \quad \text{for all } x \in \Omega,
\]
and 
\[
\partial_\nu\Phi_1(x) \not\equiv 0 \quad \text{for all } x \in \partial\Omega.
\]
\end{enumerate}
Thus, although the state itself is compatible at $t=0$, the temporal derivative may exhibit a prescribed mismatch. This distinction is essential: the incompatibility exploited for delay identification concerns the appropriate time derivative rather than the zeroth-order boundary value.

The boundary measurement we consider is the outward normal derivative
\[
\partial_\nu u|_{\partial\Omega\times(0,T)}.
\]

We can now state the general inverse problem.
\begin{theorem}\label{thm:mainthm}
Suppose $\tau<T/2$. Assume that the above regularity and compatibility assumptions hold for \eqref{eq:mainDPDE}, with
\[
c_1>0,\qquad d_1>0.
\] Given the boundary measurement:
\begin{equation}\label{thm:bdrymeasure}
\Lambda u := \partial_\nu u|_{\partial\Omega \times (0,T)},
\end{equation}
the boundary data \(h(x,t)\) on
\(\partial\Omega\times(-\tau,T)\), and the initial history
\(u_0(x,t)\), the delay \(\tau>0\) can be reconstructed from a single
boundary measurement, provided that the initial incompatibility is
nontrivial.

When \(c_2>0\), the delay is identified from the first-order temporal
jump of the boundary flux. When \(c_2=0\), the first-order jump
vanishes, and the delay is identified from the second-order temporal
jump under the corresponding nondegeneracy condition.

\end{theorem}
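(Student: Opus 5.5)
The plan is to construct the solution by the method of steps on the intervals $[k\tau,(k+1)\tau]$ and to track where its time derivatives jump. Then I show that the first jump visible in the boundary flux $\Lambda u$ after $t=0$ occurs exactly at $t=\tau$. Since $h$, $u_0$ and $\Lambda u$ are known, $\tau$ is then read off as the first positive time in $(0,T)$ at which $\partial_t\Lambda u$ (or $\partial_t^2\Lambda u$) is discontinuous. The assumption $\tau<T/2$ guarantees that both $t=\tau$ and a full subsequent interval lie in the observation window, so the jump is genuinely observed and can be distinguished from the endpoint.

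\textbf{Step 1 (method of steps).} On $(0,\tau)$ the delayed terms $c_2\partial_t u_0(\cdot,t-\tau)$, $d_2\Delta u_0(\cdot,t-\tau)$ and $q\,u_0(\cdot,t-\tau)$ are known sources. Since $c_1,d_1>0$, this is a standard nondelayed parabolic problem for $u^{(0)}$ with data $h$ and initial value $u_0(\cdot,0)$. Under the stated regularity and compatibility, parabolic regularity gives $u^{(0)}\in H^{1}(0,\tau;H^2)$, with extra smoothness in $t$ on the open interval. Iterating, on $(\tau,2\tau)$ the delayed source involves $u^{(0)}(\cdot,t-\tau)$. At $t=\tau^{+}$ this source therefore involves $\partial_t u^{(0)}(\cdot,0^+)$, whereas at $t=\tau^-$ it involved $\partial_t u_0(\cdot,0^-)$. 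The mismatch is exactly $\Phi_1$. Consequently the forcing $F$ of the equation for $u$ has a jump at $t=\tau$ equal to $-c_2\Phi_1$ in the term containing $\partial_t u(t-\tau)$. The terms $d_2\Delta u$ and $q u$ are continuous there, because $u$ itself is continuous at $0$ by boundary compatibility.

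\textbf{Step 2 (propagate the jump to $\partial_t u$ and to the flux).} Set $[f]:=f(\tau^+)-f(\tau^-)$. The value $u(\tau)$ is continuous, while $c_1[\partial_t u]=d_1[\Delta u]+[F]$. Since $u(\cdot,\tau)$ is continuous in $H^2$, we have $[\Delta u]=0$, so $[\partial_t u](x)=-(c_2/c_1)\Phi_1(x)$. I would justify this rigorously by writing $w=\partial_t u$ on each side and applying the trace of the parabolic solution. Taking the normal derivative and using that $h$ is $H^3$ in time, so that $\partial_t h$ is continuous, gives
\[
[\partial_t\partial_\nu u](x,\tau)=-\frac{c_2}{c_1}\,\partial_\nu\Phi_1(x),\qquad x\in\partial\Omega ,
\]
which is nonzero by the incompatibility assumption when $c_2>0$. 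I also need that no jump occurs at any earlier time in $(0,\tau)$. This holds because on $(0,\tau)$ all sources are $H^1$ in time, so $\partial_t\partial_\nu u$ is continuous there.

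\textbf{Step 3 (case $c_2=0$).} Here $[\partial_t u]=0$ at $\tau$. Differentiating the equation in time, the jump moves to the second derivative: $c_1[\partial_t^2u]=d_2[\Delta\partial_t u(\cdot,t-\tau)]+[q]\,\partial_t u$-type terms, which gives $c_1[\partial_t^2 u]=d_2\Delta\Phi_1+q(\cdot,\tau)\Phi_1$. The nondegeneracy condition is then that the normal derivative of this quantity on $\partial\Omega$ is nonzero, and $\tau$ is read from the first jump of $\partial_t^2\Lambda u$.

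\textbf{Main obstacle.} The hard part is the rigorous meaning of the one-sided traces $\partial_t\partial_\nu u(\tau^\pm)$ on $\partial\Omega$ with only the stated Sobolev regularity. I would handle this by applying the regularity result from Section~\ref{subsec:parreg} to $\partial_t u$ (respectively $\partial_t^2u$) on each subinterval, obtaining $H^{2,1}$ solutions whose flux traces are continuous on each closed subinterval. The difference of the two one-sided limits is then computed from the equation as above. Uniqueness of the reconstructed $\tau$ follows because any candidate delay must produce its first flux singularity at that same time.
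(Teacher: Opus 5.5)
Your proposal is correct and follows essentially the same route as the paper. Both arguments use the method of steps, the jump $[F]=-c_2\Phi_1$ of the delayed forcing at $t=\tau$ together with continuity of $u$ and $\Delta u$ across $\tau$ (giving $[\partial_t u]=-(c_2/c_1)\Phi_1$ and $[\partial_t\partial_\nu u]=-(c_2/c_1)\partial_\nu\Phi_1$), the time-differentiated equation giving $c_1[\partial_t^2u]=d_2\Delta\Phi_1+q(\cdot,\tau)\Phi_1$ when $c_2=0$, and $\tau$ taken as the first jump time of the measured flux; your two additions, ruling out jumps on $(0,\tau)$ and stating the $c_2=0$ nondegeneracy through the normal derivative of $d_2\Delta\Phi_1+q(\cdot,\tau)\Phi_1$ (which is what boundary detection needs, whereas the paper states it as an interior condition), are details the paper leaves implicit.
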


We also derive a refined stability analysis for the coefficients $p=p(x)$ and $q=q(x)$, making use of two boundary measurements. For $0<\tau<T$, assume that $c_1,d_1>0$.

Define the set of admissible coefficients:
\begin{equation}\label{eq:pq-bound}
\mathcal{P}_{M}:=\left\{(p, q) \in W^{1, \infty}(\Omega):
\| p\| _{W^{1, \infty}(\Omega)}+\| q\| _{W^{1, \infty}(\Omega)} \leq M\right\} .
\end{equation}

Consider two experiments indexed by $i=1,2$. For each experiment, suppose that, for every $(p,q)\in\mathcal P_M$, \eqref{eq:mainDPDE} has a unique solution $u^{[i]}[p,q]$ satisfying, for some constant $C_{\rm reg}$ independent of $(p,q)\in\mathcal P_M$,
\[
u^{[i]}[p,q]\in
W^{2,\infty}(0,\tau;L^2(\Omega))
\cap
W^{1,\infty}(0,\tau;H^2(\Omega))
\]
such that the prescribed boundary data is the same
\[
u^{[i]}[p,q]=h
\quad\text{on }\partial\Omega\times(0,\tau)
\]
and the initial history 
\[u_0^{[i]}\in W^{1,\infty}(-\tau,0;H^2(\Omega)), \qquad \partial_tu_0^{[i]}\in L^\infty(-\tau,0;L^\infty(\Omega))\]
is the same, 
and there exists $M_1>0$, independent of $(p,q)\in\mathcal P_M$, such that
\begin{equation}
\label{eq:ut-bound}
\|\partial_tu^{[i]}[p,q]\|_{L^\infty(Q_\tau)}+\|\partial_tu_0^{[i]}(\cdot,t-\tau)\|_{L^\infty(Q_\tau)}
\le M_1
\end{equation}
uniformly for $i=1,2$, where both norms are taken over the initial layer cylinder \(Q_\tau = \Omega\times(0,\tau)\).

Define
\[
A_0(x):=
\begin{pmatrix}
u_0^{[1]}(x,0) & u_0^{[1]}(x,-\tau)\\[1mm]
u_0^{[2]}(x,0) & u_0^{[2]}(x,-\tau)
\end{pmatrix}.
\]
Assume that there exists a constant $\mu_0>0$ such that
\begin{equation}\label{eq:pqnondeg}
    \sigma_{\min}(A_0(x))\ge \mu_0 \qquad\text{for a.e. }x\in\Omega,
\end{equation}
where $\sigma_{\min}(A_0(x))$ denotes the smallest singular value of
$A_0(x)$. Then, the following theorem holds:

\begin{theorem}[Lipschitz stability for the zeroth-order coefficients $p,q$]
\label{thm:p-stability}
Consider two experiments $i=1,2$. Let
\[
(p_1,q_1),(p_2,q_2)\in\mathcal P_M
\]
and let $u_j^{[i]}:=u^{[i]}[p_j,q_j]$, $j=1,2$ be the corresponding solutions, with the same boundary data $h$ and history data $u_0^{[i]}$, satisfying the a priori bound \eqref{eq:ut-bound} uniformly for $i,j=1,2$.

Then there exists a constant
\[
C>0
\]
depending only on
\[
\Omega,\tau,c_1,d_1,M,\mu_0,M_1
\]
and the uniform forward regularity/compatibility bounds, such that
\begin{equation}
\label{eq:final-stability}
\|p_1-p_2\|_{L^2(\Omega)}
+
\|q_1-q_2\|_{L^2(\Omega)}
\\
\le
C\sum_{i=1}^{2}
\left\|
\partial_t\partial_\nu u_1^{[i]}
-
\partial_t\partial_\nu u_2^{[i]}
\right\|_{L^2(\Sigma_\tau)}.
\end{equation}

In particular, if
\[
\partial_\nu u_1^{[i]}
=
\partial_\nu u_2^{[i]}
\qquad\text{on }\Sigma_\tau,
\qquad i=1,2,
\]
then
\[
p_1=p_2,
\qquad
q_1=q_2
\qquad\text{a.e. in }\Omega.
\]
\end{theorem}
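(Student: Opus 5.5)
On the first interval $(0,\tau)$ the delayed terms are known data, and I would exploit this. For $t\in(0,\tau)$ we have $t-\tau\in(-\tau,0)$, so $u(x,t-\tau)=u_0(x,t-\tau)$ is prescribed history. Hence on $Q_\tau$ equation \eqref{eq:mainDPDE} becomes a standard parabolic equation
\[
c_1\partial_t u-d_1\Delta u-p(x)u=q(x)u_0(x,t-\tau)+F_0(x,t),\qquad F_0:=d_2\Delta u_0(\cdot,t-\tau)-c_2\partial_tu_0(\cdot,t-\tau),
\]
where $F_0$ is independent of $(p,q)$. For each experiment $i$, set $w^{[i]}:=u_1^{[i]}-u_2^{[i]}$, $f:=p_1-p_2$ and $g:=q_1-q_2$. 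Subtracting the two equations gives
\[
c_1\partial_t w^{[i]}-d_1\Delta w^{[i]}-p_1w^{[i]}=f(x)\,u_2^{[i]}(x,t)+g(x)\,u_0^{[i]}(x,t-\tau)\quad\text{in }Q_\tau,
\]
with $w^{[i]}=0$ on $\Sigma_\tau$ and $w^{[i]}(\cdot,0)=0$, because the histories coincide. This is the classical Bukhgeim--Klibanov setting: the source has the form $R(x,t)$ times an unknown spatial function, with $R$ known at one time level.

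\textbf{Step 1 (differentiate in time).} I would set $z^{[i]}:=\partial_t w^{[i]}$. It satisfies $c_1\partial_t z-d_1\Delta z-p_1z=f\,\partial_tu_2^{[i]}+g\,\partial_t u_0^{[i]}(\cdot,t-\tau)$ with $z=0$ on $\Sigma_\tau$. The coefficients are bounded by $M_1$ through \eqref{eq:ut-bound}. Evaluating the $w$-equation at $t=0$, where $w(\cdot,0)=0$, $\Delta w(\cdot,0)=0$ and $u_2^{[i]}(\cdot,0)=u_0^{[i]}(\cdot,0)$, gives
\[
c_1 z^{[i]}(x,0)=f(x)u_0^{[i]}(x,0)+g(x)u_0^{[i]}(x,-\tau),\qquad i=1,2.
\]
The matrix of this $2\times 2$ system in $(f,g)$ is exactly $A_0(x)$, so \eqref{eq:pqnondeg} yields pointwise $|f|+|g|\le C\mu_0^{-1}\sum_i|z^{[i]}(x,0)|$. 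The whole argument therefore reduces to bounding $\sum_i\|z^{[i]}(\cdot,0)\|_{L^2(\Omega)}$ by $C\sum_i\|\partial_\nu z^{[i]}\|_{L^2(\Sigma_\tau)}+\varepsilon(\|f\|+\|g\|)$, with $\varepsilon$ small.

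\textbf{Step 2 (Carleman estimate).} I would use a parabolic Carleman weight $e^{2s\varphi}$ with boundary observation. For instance $\varphi=e^{\lambda\psi(x)}$ with $\psi$ having no critical points in $\overline\Omega$ and a weight in $t$ that is maximal at $t=0$, such as $\varphi(x,t)=e^{\lambda(\psi(x)-\beta t)}$ on $(0,\tau)$. A cutoff near $t=\tau$ handles the far end. One then writes
\[
\int_\Omega|z(x,0)|^2e^{2s\varphi(x,0)}\,dx=-\int_0^\tau\partial_t\Bigl(\int_\Omega|\chi z|^2e^{2s\varphi}\Bigr)dt,
\]
expands the right-hand side, and inserts the Carleman estimate for $z$, whose source term is bounded by $CM_1^2\int(|f|^2+|g|^2)e^{2s\varphi}$. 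The boundary term $\int_{\Sigma_\tau}|\partial_\nu z|^2e^{2s\varphi}$ appears on the right, and the cutoff error at $t\approx\tau$ is $O(e^{2s(\max\varphi(\cdot,\tau))})$ times a priori norms. Bounding it requires the regularity in the hypotheses, namely $W^{2,\infty}L^2\cap W^{1,\infty}H^2$ and $C_{\rm reg}$, together with the energy bound $\|z\|\le C(\|f\|+\|g\|)$.

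\textbf{Step 3 (absorption).} Combining Steps 1 and 2 gives
\[
\int_\Omega(|f|^2+|g|^2)e^{2s\varphi(\cdot,0)}\le \frac{C}{s}\int_Q(|f|^2+|g|^2)e^{2s\varphi}+Ce^{Cs}\sum_i\|\partial_\nu z^{[i]}\|^2_{L^2(\Sigma_\tau)}.
\]
Since $\varphi(x,t)\le\varphi(x,0)$, taking $s$ large absorbs the first term and gives \eqref{eq:final-stability}. Uniqueness then follows immediately.

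The main obstacle is Step 2: obtaining a Carleman estimate whose weight is maximal at $t=0$ rather than vanishing there, with only lateral Dirichlet-zero data and a flux observation on all of $\partial\Omega$. It must also control the $\partial_t(e^{2s\varphi})$ term and the $t=\tau$ cutoff without having data at time $\tau$. Among the Carleman weights available on a half-open interval, I would first try the Imanuvilov--Yamamoto type weight in the regularized form $e^{\lambda\psi}-e^{2\lambda\|\psi\|}$ combined with $\beta t$. The term at $t=\tau$ would be controlled by the a priori bounds via the standard compactness-free absorption.
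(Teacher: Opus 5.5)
Your outline reproduces the paper's proof step for step: reduction to $Q_\tau$, differentiation in $t$, the $2\times2$ system with matrix $A_0(x)$ at $t=0$, an energy bound plus a weight gap between $t=\tau$ and $t=0$ to handle the far end (the paper's $\kappa\tau>\Psi_+-\Psi_-$), and the $1/s$ gain from $\int_0^\tau e^{2s\varphi}\,dt$ to absorb the source. You use a cutoff where the paper keeps a terminal trace. The genuine gap is your Step 2, and it cannot be filled.

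Take a weight that is finite and maximal at $t=0$, and set $y=e^{s\varphi}z$. The pairing of $c_1\partial_ty$ with $-d_1\Delta y$ is unavoidable and gives
\[
2\int_{Q_\tau}c_1\partial_ty\,(-d_1\Delta y)\,dx\,dt=c_1d_1\bigl(\|\nabla y(\cdot,\tau)\|_{L^2(\Omega)}^2-\|\nabla y(\cdot,0)\|_{L^2(\Omega)}^2\bigr).
\]
Combined with the favourable $s^2$ trace term, the net contribution of $t=0$ is $c_1d_1\int_\Omega e^{2s\varphi}\bigl[(2s^2|\nabla\varphi|^2+s\Delta\varphi)|z|^2-|\nabla z|^2\bigr](x,0)\,dx$. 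So $\int_\Omega e^{2s\varphi(x,0)}|\nabla z(x,0)|^2\,dx$ necessarily lands on the right-hand side. In your problem $\nabla z(\cdot,0)$ contains $\nabla f$ and $\nabla g$, which $\|f\|_{L^2}+\|g\|_{L^2}$ does not control. Citing the paper does not close this. Its initial-trace Carleman lemma asserts exactly the estimate you are missing, but its proof never integrates this cross term by parts. Instead it disposes of all mixed terms containing $D=c_1y_t$ by Young's inequality, which leaves $\varepsilon^{-1}d_1^2|\Delta y|^2$ unabsorbed.

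This is not a technicality. Take $\Omega=(0,1)$, $c_1=d_1=1$, $p=0$, and $z_k=e^{-k^2\pi^2t}\sin(k\pi x)-e^{-(k+2)^2\pi^2t}\sin((k+2)\pi x)$. This function solves the homogeneous equation, vanishes on $\Sigma_\tau$, and satisfies $\|z_k(\cdot,0)\|_{L^2(\Omega)}=1$ and $|z_k|\le2e^{-k^2\pi^2\tau/2}$ for $t\ge\tau/2$. A direct computation gives $\|\partial_\nu z_k\|_{L^2(\Sigma_\tau)}^2\le4/(k^2+2k+2)$. Hence no inequality bounding $\|z(\cdot,0)\|_{L^2(\Omega)}$ by the source, the lateral flux and quantities supported near $t=\tau$ can hold for fixed weights. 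This refutes your cutoff version and the paper's lemma alike.

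The same example defeats the target inequality of your Step 3. Take $c_2=d_2=0$, $q_1=q_2=0$ (so $g=0$), $p_2=0$, and data with $u_2^{[1]}\equiv1$, $u_2^{[2]}\equiv0$ on $Q_\tau$ and $A_0\equiv I$. Then $z^{[2]}\equiv0$, while $z^{[1]}$ solves the heat equation with potential $p_1=f$ and initial value $f$. Choosing $f=\varepsilon(\sin(k\pi x)-\sin((k+2)\pi x))$ gives $\|f\|_{L^2}=\varepsilon$ against data of size $O(\varepsilon/k+\varepsilon^2)$, and letting $\varepsilon\to0$ is compatible with any a priori bound on $f$.

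The Bukhgeim--Klibanov/Imanuvilov--Yamamoto mechanism needs the time at which the source factor is nondegenerate to lie in the interior of the observation interval, with the state measured there. Then the weight decays on both sides and no trace terms arise. At the endpoint $t=0$ you are asking for Lipschitz recovery of an initial state from lateral flux, which is false. Closing the argument requires additional data (e.g.\ $u(\cdot,\theta)$ for some $\theta\in(0,\tau)$) or a weaker conclusion than Lipschitz stability.
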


The inverse character of these problems is twofold. On the one hand, $\tau$ enters the equation through shifted temporal arguments, thereby changing the locations of the interfaces generated by the method of steps. On the other hand, $p$ and $q$ may depend on the spatial variable, so their recovery constitutes an infinite-dimensional identification problem. The analysis below separates these two tasks: first, the delay is detected from the temporal singularity induced by the initial incompatibility; second, the coefficients are treated after the delay has been determined.

\subsection{Novelty and Contribution}
Our approach rests on three interconnected ideas:

\begin{enumerate}
    \item First, and most fundamentally, we recover the unknown delay $\tau$ itself, a task that has no precedent in the inverse problem literature for PDEs. The method of steps constructs the solution successively on intervals $[k\tau,(k+1)\tau]$. On the first interval, delayed quantities are prescribed by the history; at later interfaces, the solution depends on previously generated data. Any mismatch in temporal derivatives at the initial interface (induced by incompatibility between the history and boundary data) propagates through the parabolic equation and becomes visible in the boundary flux at times $t=k\tau$. The first appearance of this singular behavior identifies $\tau$. The precise derivative in which the singularity manifests depends on the coefficient regime: first-order in time if $c_2>0$, second-order if $c_2=0$, or directly in the spatial normal derivative if $c_1=0$. Different regimes require separate arguments, but the underlying mechanism is uniform.

\item Second, once $\tau$ is known, we restrict to the initial layer $(0,\tau)$, where all delayed terms are prescribed by the history:
\[
u(x,t-\tau)=u_0(x,t-\tau),\quad 
\partial_t u(x,t-\tau)=\partial_t u_0(x,t-\tau),\quad
\Delta u(x,t-\tau)=\Delta u_0(x,t-\tau).
\]
Thus, \eqref{eq:mainDPDE} reduces to a classical parabolic equation with known forcing. Two experiments with distinct initial histories yield a $2\times2$ algebraic system for $p$ and $q$ at each spatial point; a single experiment is insufficient.

\item Third, for time-independent coefficients $p=p(x)$, $q=q(x)$, we establish a Lipschitz stability estimate:
\begin{equation}
\|p_1-p_2\|_{L^2(\Omega)}
+
\|q_1-q_2\|_{L^2(\Omega)}
\le
C\sum_{i=1}^{2}
\left\|
\partial_t\partial_\nu u_1^{[i]}
-
\partial_t\partial_\nu u_2^{[i]}
\right\|_{L^2(\Sigma_\tau)},
\end{equation}
where $\Sigma_\tau=\partial\Omega\times(0,\tau)$. The proof employs a Carleman estimate localized to the initial layer, where delayed terms do not enter the difference equation. This is the first quantitative stability result for coefficient recovery in a delayed parabolic PDE from boundary data.
\end{enumerate}

Having established our approach, we now situate it within the broader literature. Inverse coefficient problems for instantaneous parabolic equations are well developed, with Carleman inequalities and unique continuation providing powerful tools (see \cite{IsakovIPBook, Klibanov1992IPCarleman, BukhgeimKlibanov1981Carleman, Yamamoto2009IPCarlemanParabolic,IY1998,LinLiuLiuZhang2021-InversePbSemilinearParabolic-CGOSolnsSuccessiveLinearisation,CaroKian2018-DiffusionInversePbCGO}). These techniques have been extended to biological systems, including population models \cite{CristofolRoques2013,RoquesCristofol2012InversePbPredPrey}, predator-prey dynamics \cite{li2023inverse, li2024inverse}, cross-diffusion and chemotaxis \cite{li2025determininghabitatanomaliescrossdiffusion, liu2023determining, li2025determining,DingLiuZheng2023JMBInversePbBio}, multi-population aggregation \cite{li2024inverse}, chemotaxis-Navier-Stokes systems \cite{LiLoCAC2024}, coupled nonlocal PDEs \cite{DingLiuLo2024inverseproblemscouplednonlocal}, and Turing patterns \cite{li2025unveilingbiologicalmodelsturing}. However, all assume instantaneous dynamics; none considers time delays or delayed diffusion.

Parameter identification for delay differential equations has been studied in ODE and abstract semigroup settings, with standard references including \cite{KuangDDEBook, MacDonald1989BioDelayBook, BellenZennaro2003DDENumerics,SmithDDEBook}. Various approaches have been developed, including semigroup methods, integral equations, and control-theoretic formulations (see \cite{Banks1982NonlinearDelayBookChapter, LeylazWangSun2022IJDCNonlinearDynamicalDelay,BocharovRomanyukha1994NumericalIPDDEImmune}). More recently, Ruhil and Malik \cite{RuhilMalik2024IPDDEImpulsive,RuhilMalik2025IPDDE,RuhilMalik2025SemilinearDDEIP} considered abstract delay systems in Hilbert spaces, using $C_0$-semigroups and Volterra integral equations. In all these works, however, $\tau$ is assumed known, the unknown parameters are finite-dimensional, and observations are full-state or interior -- conditions that cannot accommodate spatially dependent coefficients, boundary fluxes, or delayed PDE operators.

The present work differs fundamentally from this literature in five respects. 
\begin{enumerate}
\item \textbf{Recovery of the delay time:} To the best of our knowledge, no prior work has recovered the unknown delay time $\tau$ itself from boundary measurements in any PDE setting. All existing inverse problems for delay equations assume $\tau$ is known a priori. Our singularity-based method is the first to identify $\tau$ from boundary data.
\item \textbf{Spatial dependence:} The coefficients $p,q$ are functions of space, yielding an infinite-dimensional inverse problem that cannot be reduced to finite-dimensional algebraic or integral-equation methods.
\item \textbf{Delayed diffusion and delayed time derivatives:} We include $d_2\Delta u(t-\tau)$ and $c_2\partial_t u(t-\tau)$, introducing nonlocal-in-time couplings of spatial operators absent in all prior inverse PDE work.
\item \textbf{Boundary measurements:} The observation is a boundary flux $\partial_\nu u$, rather than full-state or interior data, necessitating Carleman-based unique continuation.
\item \textbf{Lipschitz stability:} Estimate \eqref{eq:final-stability} is the first quantitative stability result for coefficient recovery in a delayed parabolic PDE from boundary measurements.
\end{enumerate}

In summary, this is the first work to simultaneously recover an unknown delay time $\tau$ and spatially dependent coefficients in a parabolic PDE with delayed diffusion and delayed time derivatives from boundary data. The combination of unknown delay, infinite-dimensional coefficients, retarded spatial operators, and boundary-only measurements has no precedent in the inverse problem literature.

\subsection{Organization of the Paper}

The remainder of this paper is structured as follows. Section 2 introduces the function-space setting, well-posedness, and compatibility assumptions. Section 3 addresses recovery of $\tau$ via singularity propagation, with separate treatments for different coefficient regimes. Section 4 covers coefficient identification on the initial layer and proves the Lipschitz stability estimate. Section 5 discusses extensions and open problems.

\section{Preliminaries}\label{sec:prelim}

\subsection{Function Spaces}

Let \( \Omega' \subset \mathbb{R}^n \) be a bounded domain with \( C^2 \) boundary \( \partial\Omega' \). Fix \( T' > 0 \). 
Define the anisotropic Sobolev space:
\[
H^{2,1}(Q_{T'}) = \left\{ u \in L^2(Q_{T'}) : \partial_t u, \partial_{x_i} u, \partial_{x_i x_j} u \in L^2(Q_{T'}) \right\}.
\]

The norm is:
\[
\|u\|_{H^{2,1}(Q_{T'})}^2 = \|u\|_{L^2(Q_{T'})}^2 + \|\partial_t u\|_{L^2(Q_{T'})}^2 + \|\nabla u\|_{L^2(Q_{T'})}^2 + \|D^2 u\|_{L^2(Q_{T'})}^2.
\]

For the boundary data on \(\Sigma_{T'} = \partial\Omega' \times (0,T')\), consider the anisotropic trace space:
\[
H^{3/2,3/4}(\Sigma_{T'}) := L^2(0,T'; H^{3/2}(\partial\Omega')) \cap H^{3/4}(0,T'; L^2(\partial\Omega'))
\]
with norm 
\[
\|h\|_{H^{3/2,3/4}(\Sigma_{T'})}^2 = \int_0^{T'} \|h(\cdot, t)\|_{H^{3/2}(\partial\Omega')}^2 dt + \int_{\partial\Omega'} \|h(x, \cdot)\|_{H^{3/4}(0,T')}^2 dS.
\]

\subsection{Parabolic Regularity}\label{subsec:parreg}
Assume:
\[
f \in L^2(0,T; L^2(\Omega')), \qquad h \in H^{3/2,3/4}(\Sigma_{T'}), \qquad u_0 \in H^1_0(\Omega'),
\]
and the compatibility condition:
\[
u_0(x) = h(x,0) \quad \text{on } \partial\Omega'.
\]

Consider the initial-boundary value problem:
\[
\begin{cases}
c_1 \partial_t u(x,t) - d_1 \Delta u(x,t) + a(x,t) u(x,t) = f(x,t), & \quad\text{in } Q_{T'}, \\
u(x,t) = h(x,t), & \quad\text{in }\Sigma_{T'}, \\
u(x,0) = u_0(x), &\quad\text{in }\Omega',
\end{cases}
\]
where \( c_1, d_1 > 0 \) are constants and \( a \in L^\infty(Q_{T'}) \).

Then, the standard regularity result is given by (c.f. \cite[Theorem 7.1.5]{EvansBook} or \cite[Chapter 4]{LadyzhenskayaSolonnikovUraltsevaBook}:

\begin{theorem}[Existence and Regularity]\label{thm:ParabolicThm}
Under the above assumptions, there exists a unique weak solution \( u \) satisfying:
\[
u-h \in L^2(0,T'; H^2(\Omega')) \cap L^\infty(0,T'; H^1_0(\Omega')), \qquad \partial_t u \in L^2(0,T'; L^2(\Omega')).
\]
Moreover, by the Sobolev embedding theorem,
\[
u \in C([0,T']; H^1_0(\Omega')).
\]
The following a priori estimate holds:
\begin{multline*}
\sup_{0 \le t \le T'} \|u(t)\|_{H^1_0(\Omega')}
+ \|u\|_{L^2(0,T'; H^2(\Omega'))}
+ \|\partial_t u\|_{L^2(0,T'; L^2(\Omega'))}
\\\le C \left( \|u_0\|_{H^1_0(\Omega')} + \|f\|_{L^2(0,T'; L^2(\Omega'))} + \|h\|_{H^{3/2,3/4}(\Sigma_{T'})} \right).
\end{multline*}
\end{theorem}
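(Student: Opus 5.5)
The plan is to reduce to homogeneous Dirichlet data by lifting the boundary datum, and then apply the classical Galerkin/energy argument together with elliptic $H^2$-regularity, as in \cite[Theorem 7.1.5]{EvansBook}.

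\textbf{Step 1: lifting.} By the trace theorem for anisotropic Sobolev spaces \cite[Chapter 4]{LadyzhenskayaSolonnikovUraltsevaBook}, the trace map $H^{2,1}(Q_{T'})\to H^{3/2,3/4}(\Sigma_{T'})$ has a bounded right inverse. So there is $H\in H^{2,1}(Q_{T'})$ with
\[
H|_{\Sigma_{T'}}=h,\qquad \|H\|_{H^{2,1}(Q_{T'})}\le C\|h\|_{H^{3/2,3/4}(\Sigma_{T'})}.
\]
Since $H^{2,1}(Q_{T'})\hookrightarrow C([0,T'];H^1(\Omega'))$, the trace $H(\cdot,0)$ lies in $H^1(\Omega')$. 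Set $w=u-H$. Then $w$ solves
\[
c_1\partial_t w-d_1\Delta w+aw=F:=f-c_1\partial_tH+d_1\Delta H-aH\in L^2(Q_{T'}),
\]
with $w=0$ on $\Sigma_{T'}$ and $w(\cdot,0)=w_0:=u_0-H(\cdot,0)$. The compatibility condition $u_0=h(\cdot,0)$ on $\partial\Omega'$ gives $w_0\in H^1_0(\Omega')$. Throughout, the statement is read as $u-h\in L^\infty(0,T';H^1_0)$ with $h$ replaced by its lift $H$.

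\textbf{Step 2: Galerkin approximation and energy bounds.} Let $w_m$ be the Galerkin approximations built from the Dirichlet eigenfunctions of $-\Delta$ on $\Omega'$.
\begin{itemize}
\item Testing the approximate equation with $w_m$ gives a bound on $w_m$ in $L^\infty(0,T';L^2)\cap L^2(0,T';H^1_0)$. The term $aw_m$ is absorbed by Gronwall, using $a\in L^\infty$.
\item Testing with $\partial_t w_m$ gives
\[
c_1\|\partial_t w_m\|^2+\tfrac{d_1}{2}\tfrac{d}{dt}\|\nabla w_m\|^2\le \tfrac{c_1}{2}\|\partial_t w_m\|^2+C\left(\|F\|^2+\|w_m\|^2\right).
\]
Integrating in time yields uniform bounds for $w_m$ in $L^\infty(0,T';H^1_0)$ and for $\partial_t w_m$ in $L^2(Q_{T'})$, controlled by $\|w_0\|_{H^1}+\|F\|_{L^2}$.
\end{itemize}

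\textbf{Step 3: passage to the limit, regularity and uniqueness.} Weak(-$*$) compactness lets us pass to the limit, producing a weak solution $w$. Next write
\[
-\Delta w=(F-c_1\partial_t w-aw)/d_1\in L^2(\Omega')\quad\text{for a.e. } t.
\]
Elliptic $H^2$-regularity on the $C^2$ domain then gives $w\in L^2(0,T';H^2)$ with the corresponding bound. Uniqueness follows from the first energy identity applied to the difference of two solutions, which has zero data, together with Gronwall. Since $w\in L^2(H^2\cap H^1_0)$ and $\partial_t w\in L^2(L^2)$, the standard interpolation result \cite[\S5.9, Theorem 4]{EvansBook} gives $w\in C([0,T'];H^1_0)$. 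Adding back $H$ then yields $u\in C([0,T'];H^1)$ and the stated a priori estimate, after using the lifting bound from Step 1 and $\|H(\cdot,0)\|_{H^1}\le C\|h\|_{H^{3/2,3/4}}$.

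\textbf{Main obstacle.} I expect the delicate point to be Step 1: the existence of a bounded lifting from $H^{3/2,3/4}(\Sigma_{T'})$ into $H^{2,1}(Q_{T'})$, and the precise sense in which the compatibility condition places $w_0$ in $H^1_0$. I would cite Lions–Magenes / Ladyzhenskaya–Solonnikov–Ural'tseva for this rather than reprove it. Once the lifting is in hand, the remaining steps are routine.
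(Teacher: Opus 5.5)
Your proposal is correct and is essentially the argument the paper relies on. The paper gives no proof of its own: it cites \cite[Theorem 7.1.5]{EvansBook} (Galerkin approximation, testing with $\partial_t w_m$, elliptic $H^2$-regularity) together with the Lions--Magenes lifting in the remark in Section~\ref{subsec:methodofsteps}, and you reproduce exactly that route. There are two minor differences. First, the paper's lift $\mathcal{H}$ also matches $u_0$ at $t=0$, so its reduced problem has zero initial datum, whereas you keep $w_0=u_0-H(\cdot,0)\in H^1_0(\Omega')$. Second, your conclusion $u\in C([0,T'];H^1(\Omega'))$ is the correct form of the stated $C([0,T'];H^1_0(\Omega'))$, which cannot hold literally once $h(\cdot,t)\neq 0$.
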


In particular, we note the pointwise continuity in time up to the boundary.

\section{Reconstruction of Time Delay \(\tau\)}\label{sec:recoverdelay}

Having established the necessary regularity framework, we now turn to the central problem of this work: the reconstruction of the time delay \(\tau\) in the generalized delay PDE. The key idea is to exploit the singularity that arises at \(t=\tau\) due to the mismatch between the initial history and the solution determined by the boundary data. To this end, we employ the method of steps, which allows us to solve the problem sequentially on intervals of length \(\tau\).

\subsection{The Method of Steps}\label{subsec:methodofsteps}
We begin by considering the case where \(c_1>0\), which corresponds to a genuinely parabolic structure. The method of steps proceeds by solving the equation interval by interval, using the solution from the previous interval as the history for the next one. This approach not only establishes existence and uniqueness but also reveals the singularity structure at the interface \(t=k\tau\).
\vspace{1em}

\textbf{Step 0: The Interval \([0,\tau]\)}
On the first interval \( t \in [0,\tau] \), the delayed term \(u(x,t-\tau)\) is determined entirely by the known initial history \(u_0\). Specifically, we have \( u(x,t-\tau) = u_0(x,t-\tau) \) and \( \partial_t u(x,t-\tau) = \partial_t u_0(x,t-\tau) \). \eqref{eq:mainDPDE} becomes:
\begin{equation}\label{eq:MainStep0}
    c_1 \partial_t u(x,t) + c_2 \partial_t u_0(x,t-\tau) = d_1 \Delta u(x,t) + d_2 \Delta u_0(x,t-\tau) + p(x,t) u(x,t) + q(x,t) u_0(x,t-\tau).
\end{equation}

Rearranging:
\begin{equation}\label{eq:MainStep0Rearrange1}
c_1 \partial_t u(x,t) - d_1 \Delta u(x,t) - p(x,t) u(x,t) = - c_2 \partial_t u_0(x,t-\tau) + d_2 \Delta u_0(x,t-\tau) + q(x,t) u_0(x,t-\tau).
\end{equation}
Dividing by \( c_1 > 0 \):
\begin{equation}\label{eq:MainStep0Rearrange2}
\partial_t u - \frac{d_1}{c_1} \Delta u - \frac{p(x,t)}{c_1} u = F_0(x,t),
\end{equation}
where
\begin{equation}\label{eq:MainStep0Rearrange2F0}
F_0(x,t) = -\frac{c_2}{c_1} \partial_t u_0(x,t-\tau) + \frac{d_2}{c_1} \Delta u_0(x,t-\tau) + \frac{q(x,t)}{c_1} u_0(x,t-\tau),
\end{equation}
with initial condition \( u(x,0) = u_0(x,0) \) and nonzero Dirichlet boundary condition \( u(x,t) = h(x,t) \) on \( \partial\Omega \).

Equation \eqref{eq:MainStep0Rearrange2} is a linear parabolic equation with time-dependent coefficients. Its solution exists and is unique by the standard parabolic theory given in Theorem \ref{thm:ParabolicThm}. Let \( u^{(0)} \) denote the unique solution on \( [0,\tau] \).

\begin{remark}
    Observe that when \(c_1=d_1=0\), \eqref{eq:MainStep0Rearrange2} reduces to an algebraic equation of the form 
    \[p(x,t) u(x,t) = F_0(x,t),\] and the solution is explicitly determined by the initial data. This case is devoid of any evolutionary structure, and we therefore exclude it from our analysis by assuming \(c_1+d_1>0\) in \eqref{eq:CoefAssump}.

    Similarly, when \(c_2=d_1=0\) and , \eqref{eq:MainStep0Rearrange2} reduces to an ODE in time for each fixed \(x\). In this degenerate scenario, the boundary measurement \eqref{thm:bdrymeasure} carries no information about the spatial structure of the solution, rendering the inverse problem ill-posed. Hence, we similarly do not consider this case.
\end{remark}

\begin{remark}
At this point, a subtle regularity issue arises. The minimal regularity assumptions in the standard parabolic theory requires that the initial condition lies in $H^1_0$. On the other hand, \(u_0 \in L^2(-\tau,0; H^2(\Omega)) \cap H^1(-\tau,0; L^2(\Omega))\) with the compatibility condition \(u_0(x,0) = h(x,0)\) on the boundary \(\partial\Omega\), where \(h \in H^{3/2,3/4}(\Sigma_T)\). To reconcile these requirements, we employ the standard lifting method (c.f. \cite[Chapter 4]{LionsMagenesBook2}): Let \(\mathcal{H}\) be a lifting of the boundary data such that:
\[
\mathcal{H}|_{\partial\Omega} = h, \quad \mathcal{H}(\cdot, 0) = u_0(\cdot, 0).
\]
Define \(v = u - \mathcal{H}\). Then \(v\) satisfies the homogeneous Dirichlet problem:
\[
\begin{cases}
c_1 \partial_t v - d_1 \Delta v - p v = F - (c_1 \partial_t \mathcal{H} - d_1 \Delta \mathcal{H} - p \mathcal{H}), \\[4pt]
v|_{\partial\Omega} = 0, \\[4pt]
v(x,0) = 0.
\end{cases}
\]
For this, the initial condition is \(v(\cdot, 0) = 0 \in H^1_0(\Omega)\), so Theorem 5 applies.
\end{remark}
\vspace{1em}

\textbf{Step 1: The Interval \([\tau, 2\tau]\)}
Having determined the solution on the first interval, we now proceed to the next one. 
On \( t \in [\tau, 2\tau] \), we have \( u(x,t-\tau) = u^{(0)}(x,t-\tau) \) and \( \partial_t u(x,t-\tau) = \partial_t u^{(0)}(x,t-\tau) \). Equation \eqref{eq:mainDPDE} becomes:
\begin{equation}\label{eq:MainStep1}
c_1 \partial_t u(x,t) + c_2 \partial_t u^{(0)}(x,t-\tau) = d_1 \Delta u(x,t) + d_2 \Delta u^{(0)}(x,t-\tau) + p(x) u(x,t) + q(x) u^{(0)}(x,t-\tau).
\end{equation}
Rearranging:
\begin{equation}\label{eq:MainStep1Rearrange}
\partial_t u - \frac{d_1}{c_1} \Delta u - \frac{p}{c_1} u = -\frac{c_2}{c_1} \partial_t u^{(0)}(t-\tau) + \frac{d_2}{c_1} \Delta u^{(0)}(t-\tau) + \frac{q}{c_1} u^{(0)}(t-\tau).
\end{equation}
This is again a linear parabolic equation with time-dependent coefficients and nonzero boundary data. By the same parabolic theory, there exists a unique solution, which we denote by $u^{(1)}$.

The method of steps has a natural biological interpretation. On the interval $(0,\tau)$, the population dynamics are influenced only by the known history $u_0$. This corresponds to a period before the delayed feedback from the population's own dynamics arrives. For example:
\begin{itemize}
\item In a population with maturation delay $\tau$, the interval $(0,\tau)$ represents the time before the first juveniles mature and start reproducing.
\item In an epidemiological model, it is the time before the first newly infected individuals become infectious.
\item In a neural field model, it is the time before signals from distant neurons arrive.
\end{itemize}

This initial window is of particular importance for inverse problems: during this phase, the system behaves like a classical reaction-diffusion system with known forcing, which significantly simplifies the analysis. After \(t=\tau\), the delayed feedback becomes active, and the full complexity of the delay equation emerges. This is analogous to the ``waiting time" in biological systems before the effects of a perturbation propagate through the delayed pathways.

\subsection{The Singularity at \( t = \tau \)}
We now turn to the key observation that underlies the reconstruction of \(\tau\): the solution remains continuous at \(t=\tau\), but its time derivative generically exhibits a jump. This jump, which arises from the incompatibility between the initial history and the solution on the first interval, serves as a measurable signature of the delay.

The method of steps ensures that $u^{(1)}$ is the unique solution of the parabolic equation on $[\tau, 2\tau]$ with initial condition $u(x,\tau) = u^{(0)}(x,\tau)$. Since the solution of the parabolic equation is continuous in time up to the boundary for each time interval, 
\[
\lim_{t \to \tau^-} u^{(0)}(x,t) = u^{(0)}(x,\tau) = u(x,\tau) = \lim_{t \to \tau^+} u^{(1)}(x,t)\quad \text{for all } x \in \Omega.
\]

At the same time, the continuity result $u \in C([0,T']; H^1_0(\Omega'))$ in Theorem \ref{thm:ParabolicThm} also implies:
\[
\|u(\cdot, t) - u(\cdot, \tau)\|_{H^1(\Omega)} \to 0 \quad \text{as } t \to \tau.
\]
Hence, the solution and its spatial gradient are continuous in the $L^2$ sense.

\subsection{Discontinuity of \( \partial_t u \) at \( t = \tau \)}

While \(u\) is continuous, its time derivative generally fails to be continuous at \(t=\tau\). This discontinuity is the central object of our reconstruction strategy.

\begin{theorem}[Jump in \( \partial_t u \)]\label{thm:jump}
Under the incompatibility assumption, \( \partial_t u \) has a jump discontinuity at \( t = \tau \):
\begin{equation}\label{eq:discontinuity}
\left[\partial_t u\right]_{t=\tau^-}^{t=\tau^+}
:= \lim_{t \to \tau^+} \partial_t u^{(1)}(x,t) - \lim_{t \to \tau^-} \partial_t u^{(0)}(x,t) \neq 0.
\end{equation}
\end{theorem}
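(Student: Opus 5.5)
We compute the jump directly by evaluating the two one-sided equations at $t=\tau$. For $t\to\tau^-$ we use \eqref{eq:MainStep0}, whose delayed arguments $u_0(x,t-\tau)$ tend to the history at $s=0$. For $t\to\tau^+$ we use \eqref{eq:MainStep1}, whose delayed arguments $u^{(0)}(x,t-\tau)$ tend to the first-layer solution at $s=0^+$. First we justify that all terms have one-sided traces at $t=\tau$. Under the regularity hypotheses ($u_0\in H^3(-\tau,0;H^2(\Omega))$, $h\in H^3(-\tau,T;H^{3/2}(\partial\Omega))$, compatibility of order $k\le 2$, and $p,q\in C^1([0,T];L^\infty)$), we differentiate \eqref{eq:MainStep0Rearrange2} in time and apply Theorem \ref{thm:ParabolicThm} to $\partial_t u$, after the lifting of the remark. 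This shows $\partial_t u^{(0)}$, $\Delta u^{(0)}$ and $u^{(0)}$ are continuous in time with values in $L^2(\Omega)$ up to $t=0$ and $t=\tau$. The same argument applied to \eqref{eq:MainStep1Rearrange} gives the corresponding statement for $u^{(1)}$ near $t=\tau^+$.

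Next, since $u$ is continuous at $t=\tau$ (shown above), $u^{(1)}(\tau)=u^{(0)}(\tau)$. Moreover $\Delta u^{(1)}(\tau^+)=\Delta u^{(0)}(\tau^-)$: both solve parabolic problems whose time-$\tau$ states coincide, and the $H^2$-in-space continuity from the previous step lets us pass to the limit. Subtracting the two equations at $t=\tau^\pm$, every instantaneous term $d_1\Delta u+pu$ cancels. With $u^{(0)}(\cdot,0)=u_0(\cdot,0)$, this leaves
\[
c_1\big[\partial_t u\big]_{\tau^-}^{\tau^+} = -c_2\big(\partial_t u^{(0)}(x,0)-\partial_t u_0(x,0)\big) + d_2\big(\Delta u^{(0)}(x,0)-\Delta u_0(x,0)\big) + q(x,\tau)\big(u^{(0)}(x,0)-u_0(x,0)\big).
\]
The last term vanishes. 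We then rewrite the Laplacian mismatch through the $t=0$ equations. Evaluating \eqref{eq:MainStep0} at $t=0^+$ gives
\[
d_1\Delta u^{(0)}(0)=c_1\partial_t u^{(0)}(0)+c_2\partial_tu_0(-\tau)-d_2\Delta u_0(-\tau)-p\,u_0(0)-q\,u_0(-\tau).
\]
Hence $\Delta u^{(0)}(0)-\Delta u_0(0)$ equals $\tfrac{c_1}{d_1}\Phi_1$ plus the defect $R(x)$ by which the history fails to satisfy the equation at $t=0$. Since $\Delta u^{(0)}(0)=\Delta u_0(\cdot,0)$ is forced by the initial condition, we obtain $[\partial_t u]=-\tfrac{c_2}{c_1}\Phi_1(x)$ in the clean case.

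The main obstacle is the last step: showing this expression is not identically zero, which needs $c_2>0$ and $\Phi_1\not\equiv0$. When $c_2=0$ the first-order jump genuinely vanishes, consistent with Theorem \ref{thm:mainthm}, and the statement must be read in the regime $c_2>0$. In that case the incompatibility assumption $\Phi_1\not\equiv 0$ gives the conclusion directly in $L^2(\Omega)$. The delicate point is the equality of the Laplacian traces at $s=0$. If only $L^2(0,\tau;H^2)$ regularity were available this could fail, so I would first secure $u^{(0)}\in C([0,\tau];H^2(\Omega))$ via the time-differentiated problem and the order-one compatibility conditions. If this is not attainable, the fallback is to keep the $d_2$ term explicitly in the jump formula and argue nontriviality from the combined quantity instead.
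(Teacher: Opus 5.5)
Your proposal is correct and is the paper's argument: evaluate the equations on $[0,\tau]$ and $[\tau,2\tau]$ at $t=\tau^\mp$, cancel the instantaneous terms by continuity at $t=\tau$ and the delayed $u$, $\Delta u$ terms via $u^{(0)}(\cdot,0)=u_0(\cdot,0)$, which leaves $c_1[\partial_t u]_{\tau^-}^{\tau^+}=-c_2\Phi_1$; the requirement $c_2>0$, which you point out, is tacitly needed in the paper as well. The detour through the $t=0$ equation and the fallback are unnecessary: $u^{(0)}\in C([0,\tau];H^2(\Omega))$ follows from the zeroth-order compatibility and the $H^1$-in-time regularity of the lifted source, so $\Delta u^{(0)}(\cdot,0^+)=\Delta u_0(\cdot,0)$ is immediate, and the paper uses exactly this without comment.
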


\begin{proof}
The jump emerges from comparing the equations on the two adjacent intervals. 
From the parabolic equation on each interval:

On \( [0,\tau] \), at \( t = \tau^- \):
\begin{equation}\label{eq:discontinuityStep0Eq}
c_1 \partial_t u^{(0)}(x,\tau^-) = d_1 \Delta u(x,\tau) + d_2 \Delta u_0(x,0) + p(x,\tau) u(x,\tau) + q(x,\tau) u_0(x,0) - c_2 \partial_t u_0(x,0).
\end{equation}

On \( [\tau, 2\tau] \), at \( t = \tau^+ \):
\begin{equation}\label{eq:discontinuityStep1Eq}
c_1 \partial_t u^{(1)}(x,\tau^+) = d_1 \Delta u(x,\tau) + d_2 \Delta u^{(0)}(x,0) + p(x,\tau) u(x,\tau) + q(x,\tau) u^{(0)}(x,0) - c_2 \partial_t u^{(0)}(x,0).
\end{equation}

The key point is that all terms involving \(u\) and its spatial derivatives are continuous across \(t=\tau\), while the terms involving the history \(u_0\) and the solution \(u^{(0)}\) at \(t=0\) may differ. Since \( u^{(0)}(x,0) = u_0(x,0) \) (continuity at \( t=0 \)) and \( \Delta u^{(0)}(x,0) = \Delta u_0(x,0) \), taking the difference of \eqref{eq:discontinuityStep0Eq} and \eqref{eq:discontinuityStep1Eq}, we have
\[
c_1 \left( \partial_t u^{(1)}(x,\tau^+) - \partial_t u^{(0)}(x,\tau^-) \right) = -c_2 \left( \partial_t u^{(0)}(x,0) - \partial_t u_0(x,0) \right).
\]

Thus, if the compatibility condition \(\Phi_1(x) = \partial_t u^{(0)}(x,0) - \partial_t u_0(x,0)\) does not vanish identically, a jump in the time derivative occurs. If \( \Phi_1(x) = \partial_t u^{(0)}(x,0) - \partial_t u_0(x,0) \neq 0 \), then:
\begin{equation}\label{eq:discontinuityform}
\left[\partial_t u\right]_{t=\tau^-}^{t=\tau^+} = -\frac{c_2}{c_1} \Phi_1(x) \neq 0.
\end{equation}
\end{proof}

The condition \(\Phi_1(x) \not\equiv 0\)
has a clear physical interpretation: at the onset of observation, the rate of change of the initial history does not match the rate of change dictated by the boundary data. Biologically, this occurs when the system is ``shocked" at \(t=0\), for instance, when the boundary condition changes abruptly or when a sudden perturbation is applied to the population. This mismatch creates a wave of discontinuity that propagates through the system and arrives at the boundary at time \(t=\tau\), thereby allowing the delay to be measured.

This phenomenon is analogous to a pulse-chase experiment in biochemistry: a sudden change in boundary conditions (the pulse) is used to measure the time it takes for the response to propagate through the system. In ecology, it is like introducing a marked population at the boundary and measuring the time until the signal appears in the flux data. The uniform lower bound \(|\Phi_1(x)| > 0\) ensures that the signal is strong enough to be detectable at the boundary.

\subsection{Jump in Second Order Time Derivatives at \( t = \tau \)}
For completeness, and because it will be needed in the case \(c_2=0\), we also analyze the jump in the second time derivative. This jump arises from differentiating the equations and carries information about the compatibility of the first derivatives.

\begin{theorem}[Jump in Second Order Time Derivatives at \(t=\tau\)]
\label{thm:second_time_jump}
Under the incompatibility assumption, the jump of
\(\partial_{tt}u\) at \(t=\tau\) is given by
\[
\begin{aligned}
\label{Jump Secon}
c_1
[\partial_{tt}u]_{\tau^-}^{\tau^+}
={}&
-\frac{c_2d_1}{c_1}\Delta\Phi_1
-\frac{c_2p(x,\tau)}{c_1}\Phi_1\\
&\quad
+d_2\Delta\Phi_1
+q(x,\tau)\Phi_1
-c_2\Phi_2.
\end{aligned}
\]
\end{theorem}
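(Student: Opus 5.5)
The plan is to repeat the argument of Theorem~\ref{thm:jump} one derivative higher. I will differentiate the step equations \eqref{eq:MainStep0Rearrange1} and \eqref{eq:MainStep1Rearrange} once in time on each of the two intervals, evaluate them at $t=\tau^\mp$, and subtract. I set
\[
\Phi_2(x):=\partial_{tt}u^{(0)}(x,0)-\partial_{tt}u_0(x,0),
\]
the second-order analogue of $\Phi_1$, with the one-sided limit $t\to0^+$ for $u^{(0)}$ and $t\to0^-$ for $u_0$.

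\textbf{Step 1 (regularity of the differentiated problems).} I first justify that $w^{(k)}:=\partial_t u^{(k)}$, $k=0,1$, has traces at $t=0^+,\tau^\pm$ with $\Delta w^{(k)}(\cdot,\tau^\pm)$ meaningful in $L^2(\Omega)$. Formally, $w^{(0)}$ solves
\[
c_1\partial_t w-d_1\Delta w-pw=\partial_tp\,u^{(0)}+\partial_t\bigl(-c_2\partial_tu_0+d_2\Delta u_0+qu_0\bigr)(\cdot,t-\tau),
\]
with boundary data $\partial_th$ and initial value $\partial_tu^{(0)}(\cdot,0)$, the latter read off from \eqref{eq:MainStep0} at $t=0$. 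The assumptions $h\in H^3(-\tau,T;H^{3/2}(\partial\Omega))$, $u_0\in H^3(-\tau,0;H^2(\Omega))$, $p,q\in C^1([0,T];L^\infty)$, together with the first- and second-order compatibility conditions, put the right-hand side in $L^2$ and make the data compatible. Theorem~\ref{thm:ParabolicThm}, applied after the lifting of the Remark, then gives $w^{(0)}\in L^2(H^2)\cap H^1(L^2)\cap C([0,\tau];H^1)$. Differentiating once more, with the $H^3$-in-time regularity, gives enough to take traces of $\Delta w^{(0)}$ and $\partial_tw^{(0)}$ at the endpoints. The same argument on $[\tau,2\tau]$ handles $w^{(1)}$, using $u^{(0)}$ as history. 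I expect this bookkeeping to be the main obstacle: one must check that each time derivative costs one order of compatibility, and that $\Delta\Phi_1$ makes sense. If the traces only exist in a weaker space, I would instead state the identity in $H^{-1}(\Omega)$ or in the distributional sense.

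\textbf{Step 2 (differentiate and evaluate).} On $[0,\tau]$,
\[
c_1\partial_{tt}u^{(0)}=d_1\Delta\partial_tu^{(0)}+p\,\partial_tu^{(0)}+\partial_tp\,u^{(0)}+d_2\Delta\partial_tu_0(t-\tau)+q\,\partial_tu_0(t-\tau)+\partial_tq\,u_0(t-\tau)-c_2\partial_{tt}u_0(t-\tau).
\]
On $[\tau,2\tau]$ the same identity holds with $u^{(0)}\to u^{(1)}$ and $u_0\to u^{(0)}$ in the delayed slots. I evaluate the first at $t=\tau^-$, so the delayed arguments are $u_0$ at $0^-$, and the second at $t=\tau^+$, so the delayed arguments are $u^{(0)}$ at $0^+$.

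\textbf{Step 3 (subtract).} When I take the difference of the two evaluated identities:
\begin{itemize}
\item the term $\partial_tp\,u$ cancels, by continuity of $u$ at $\tau$;
\item the term $\partial_tq\,u(\cdot,0)$ cancels, since $u^{(0)}(\cdot,0)=u_0(\cdot,0)$;
\item the delayed terms produce $d_2\Delta\Phi_1+q(x,\tau)\Phi_1-c_2\Phi_2$;
\item the instantaneous terms produce $d_1\Delta[\partial_tu]_{\tau^-}^{\tau^+}+p(x,\tau)[\partial_tu]_{\tau^-}^{\tau^+}$.
\end{itemize}
Substituting \eqref{eq:discontinuityform}, namely $[\partial_tu]_{\tau^-}^{\tau^+}=-\tfrac{c_2}{c_1}\Phi_1$, which holds as an $H^2$-valued identity by Step~1 so that its Laplacian is $-\tfrac{c_2}{c_1}\Delta\Phi_1$, yields exactly the stated formula for $c_1[\partial_{tt}u]_{\tau^-}^{\tau^+}$.
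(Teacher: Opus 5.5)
Your Steps 2--3 are exactly the paper's proof. You differentiate the two step equations in $t$, evaluate at $\tau^-$ and $\tau^+$, observe that the $\partial_t p$ and $\partial_t q$ terms cancel (by continuity of $u$ at $\tau$ and $u^{(0)}(\cdot,0)=u_0(\cdot,0)$), and substitute $[\partial_t u]_{\tau^-}^{\tau^+}=-\frac{c_2}{c_1}\Phi_1$; the paper omits your Step~1 entirely.

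One caveat on that extra step. Under the stated hypotheses, $u_0(\cdot,0)\in H^2(\Omega)$ only and $p,q\in C^1([0,T];L^\infty(\Omega))$, so the equation cannot be differentiated twice in time and $\Phi_1$ is in general only in $L^2(\Omega)$. Your ``$H^2$-valued'' claim is therefore overstated, and your distributional fallback is the correct reading of the identity.
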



\begin{proof}
Define the compatibility mismatch functions:
\begin{equation}\label{eq:CompatibilityMismatch}
\begin{aligned}
\Phi_0(x) &= u^{(0)}(x,0) - u_0(x,0) \equiv 0,\\
\Phi_1(x) &= \partial_t u^{(0)}(x,0) - \partial_t u_0(x,0),\\
\Phi_2(x) &= \partial_{tt} u^{(0)}(x,0) - \partial_{tt} u_0(x,0).
\end{aligned}
\end{equation}

On \(t \in [0,\tau]\), differentiating \eqref{eq:MainStep0} with respect to \(t\):
\[
\begin{aligned}
c_1 \partial_{tt} u^{(0)}(x,t) &+ c_2 \partial_{tt} u_0(x,t-\tau) \\
&= d_1 \Delta \partial_t u^{(0)}(x,t) + d_2 \Delta \partial_t u_0(x,t-\tau) \\
&\quad + \partial_t p(x,t) u^{(0)}(x,t) + p(x,t) \partial_t u^{(0)}(x,t) \\
&\quad + \partial_t q(x,t) u_0(x,t-\tau) + q(x,t) \partial_t u_0(x,t-\tau).
\end{aligned}
\]
At \(t = \tau^-\), this gives:
\begin{equation}\label{eq:SecondTimeDerivativeStep0}
\begin{aligned}
c_1 \partial_{tt} u^{(0)}(x,\tau^-) &+ c_2 \partial_{tt} u_0(x,0) \\
&= d_1 \Delta \partial_t u^{(0)}(x,\tau^-) + d_2 \Delta \partial_t u_0(x,0) \\
&\quad + \partial_t p(x,\tau) u(x,\tau) + p(x,\tau) \partial_t u^{(0)}(x,\tau^-) \\
&\quad + \partial_t q(x,\tau) u_0(x,0) + q(x,\tau) \partial_t u_0(x,0).
\end{aligned}
\end{equation}

On \(t \in [\tau, 2\tau]\), we differentiate \eqref{eq:MainStep1} with respect to \(t\) to obtain:
\[
\begin{aligned}
c_1 \partial_{tt} u^{(1)}(x,t) &+ c_2 \partial_{tt} u^{(0)}(x,t-\tau) \\
&= d_1 \Delta \partial_t u^{(1)}(x,t) + d_2 \Delta \partial_t u^{(0)}(x,t-\tau) \\
&\quad + \partial_t p(x,t) u^{(1)}(x,t) + p(x,t) \partial_t u^{(1)}(x,t) \\
&\quad + \partial_t q(x,t) u^{(0)}(x,t-\tau) + q(x,t) \partial_t u^{(0)}(x,t-\tau).
\end{aligned}
\]
At \(t = \tau^+\), this gives:
\begin{equation}\label{eq:SecondTimeDerivativeStep1}
\begin{aligned}
c_1 \partial_{tt} u^{(1)}(x,\tau^+) &+ c_2 \partial_{tt} u^{(0)}(x,0) \\
&= d_1 \Delta \partial_t u^{(1)}(x,\tau^+) + d_2 \Delta \partial_t u^{(0)}(x,0) \\
&\quad + \partial_t p(x,\tau) u(x,\tau) + p(x,\tau) \partial_t u^{(1)}(x,\tau^+) \\
&\quad + \partial_t q(x,\tau) u^{(0)}(x,0) + q(x,\tau) \partial_t u^{(0)}(x,0).
\end{aligned}
\end{equation}

Subtracting \eqref{eq:SecondTimeDerivativeStep0} from \eqref{eq:SecondTimeDerivativeStep1}, we have:
\[
\begin{aligned}
c_1 \left( \partial_{tt} u^{(1)}(\tau^+) - \partial_{tt} u^{(0)}(\tau^-) \right) &+ c_2 \Phi_2(x) \\
&= d_1 \Delta \left( \partial_t u^{(1)}(\tau^+) - \partial_t u^{(0)}(\tau^-) \right) \\
&\quad + p(\tau) \left( \partial_t u^{(1)}(\tau^+) - \partial_t u^{(0)}(\tau^-) \right) \\
&\quad + d_2 \Delta \Phi_1(x) + q(\tau) \Phi_1(x).
\end{aligned}
\]

Note that the terms involving \(\partial_t p\) and \(\partial_t q\) cancel because \(u^{(0)}(x,0) = u_0(x,0)\) and \(u(x,\tau^-) = u(x,\tau^+) = u(x,\tau)\).

Using the first derivative jump given by \eqref{eq:discontinuityform}, we get:
\begin{equation}\label{eq:eq:discontinuityformdtt}
c_1 \left[\partial_{tt} u\right]_{t=\tau^-}^{t=\tau^+}
= d_1 \Delta \left( -\frac{c_2}{c_1} \Phi_1(x) \right) + p(x,\tau) \left( -\frac{c_2}{c_1} \Phi_1(x) \right) 
+ d_2 \Delta \Phi_1(x) + q(x,\tau) \Phi_1(x) - c_2 \Phi_2(x).
\end{equation}
This establishes the jump in the second derivative, provided the right-hand side does not vanish identically.
\end{proof}

\begin{remark}
In the case \(c_2=0\), \eqref{Jump Secon} reduces to
\[
[\partial_{tt}u]_{\tau^-}^{\tau^+}
=
\frac{1}{c_1}
\left(
d_2\Delta\Phi_1
+
q(x,\tau)\Phi_1
\right).
\]
Thus, a nontrivial second-order jump is ensured by the
nondegeneracy condition
\[
d_2\Delta\Phi_1+q(\cdot,\tau)\Phi_1
\not\equiv0.
\]
\end{remark}

\subsection{Recovery of \( \tau \)}
With the jump structure established, we now present the reconstruction procedure for \(\tau\). The key is that the jump in \(\partial_t u\) propagates to the boundary through the normal derivative, where it can be observed.
\begin{proof}[Proof of Theorem \ref{thm:mainthm} (Reconstruction of $\tau$)]
Since \( \partial_\nu \) commutes with time differentiation, the jump in \( \partial_t u \) at \( t = \tau \) propagates to the boundary, and we easily obtain the following corollary to Theorem \ref{thm:jump}:

\begin{corollary}[Boundary Jump Detection with Nonzero Boundary Data]
\[
\left[\partial_t(\partial_\nu u)\right]_{t=\tau^-}^{t=\tau^+}
= -\frac{c_2}{c_1} \partial_\nu \Phi_1(x) \quad \text{on } \partial\Omega.
\]
\end{corollary}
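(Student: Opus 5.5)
The plan is to obtain the boundary jump by taking the normal trace of the interior jump identity from Theorem \ref{thm:jump}. Inside the proof of that theorem we have the pointwise relation
\[
c_1\bigl(\partial_t u^{(1)}(x,\tau^+)-\partial_t u^{(0)}(x,\tau^-)\bigr)=-c_2\,\Phi_1(x),
\]
which holds as an identity between functions of $x\in\Omega$. Both one-sided limits come from evaluating equations \eqref{eq:MainStep0} and \eqref{eq:MainStep1} at the interface. To differentiate this relation normally, I would first place each one-sided trace $\partial_t u^{(k)}(\cdot,\tau^\pm)$ in $H^2(\Omega)$, or at least in $H^{3/2+\varepsilon}(\Omega)$, so that $\partial_\nu$ of it is defined in $L^2(\partial\Omega)$.

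For the regularity I would differentiate the step equations in time. The function $w^{(0)}=\partial_t u^{(0)}$ solves a parabolic problem of the same type as \eqref{eq:MainStep0Rearrange2}. Its right-hand side is $\partial_t F_0+(\partial_t p/c_1)u^{(0)}$, which lies in $L^2(0,\tau;L^2)$ because $u_0\in H^3(-\tau,0;H^2(\Omega))$ and $p,q\in C^1([0,T];L^\infty)$. Its boundary datum is $\partial_t h\in H^{3/2,3/4}$, since $h\in H^3(-\tau,T;H^{3/2}(\partial\Omega))$. Its initial value is $\partial_t u^{(0)}(\cdot,0)$, read off from \eqref{eq:MainStep0} at $t=0$. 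The compatibility conditions with $k=1$ make Theorem \ref{thm:ParabolicThm} applicable, giving $w^{(0)}\in L^2(H^2)\cap H^1(L^2)$.

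Differentiating once more, which uses the $k=2$ compatibility and the $H^3$ time regularity, gives $\partial_t w^{(0)}\in L^2(L^2)$. Hence $w^{(0)}\in C([0,\tau];H^1)$, and by elliptic regularity applied to the equation for $w^{(0)}$ at fixed $t$, also $w^{(0)}\in C([0,\tau];H^2)$ up to the endpoints. The same argument on $[\tau,2\tau]$ handles $u^{(1)}$; there the forcing involves $u^{(0)}(\cdot-\tau)$, which has just been controlled. Consequently the traces $\partial_\nu\partial_t u^{(k)}(\cdot,\tau^\pm)$ exist in $L^2(\partial\Omega)$ and are the one-sided limits of $\partial_t\partial_\nu u$, since $t\mapsto\partial_\nu w(t)$ is continuous into $H^{1/2}(\partial\Omega)$.

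Next I would check that $\partial_\nu$ commutes with $\partial_t$, i.e.\ $\partial_t(\partial_\nu u)=\partial_\nu(\partial_t u)$ on each open interval. The trace map $H^2(\Omega)\to H^{1/2}(\partial\Omega)$ is bounded and linear, so it commutes with the Bochner time derivative. Applying $\partial_\nu$, which is linear and continuous $H^2\to H^{1/2}(\partial\Omega)$, to the interior jump identity then yields
\[
\bigl[\partial_t(\partial_\nu u)\bigr]_{\tau^-}^{\tau^+}=-\frac{c_2}{c_1}\,\partial_\nu\Phi_1 \quad\text{on } \partial\Omega.
\]
Here $\Phi_1=\partial_t u^{(0)}(\cdot,0)-\partial_t u_0(\cdot,0)\in H^2(\Omega)$, so its normal derivative makes sense. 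The incompatibility hypothesis $\partial_\nu\Phi_1\not\equiv0$ then makes this jump nontrivial whenever $c_2>0$.

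The main obstacle is the regularity step. One must justify that the interface traces are taken in $H^2$ strongly, not merely in $L^2$ as in Theorem \ref{thm:ParabolicThm}. This requires the higher compatibility conditions ($k=1,2$) at $t=0$ for the differentiated problems. At $t=\tau$ for $u^{(1)}$, continuity of $u$ alone suffices, since $u^{(1)}$ starts from $u^{(0)}(\cdot,\tau)$ and its time derivative is defined from the equation rather than inherited. Once this is in place, the corollary follows by linearity.
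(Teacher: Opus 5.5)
Your proposal follows the paper's argument: the paper obtains the corollary in one line by applying $\partial_\nu$ to the interior identity $[\partial_t u]_{\tau^-}^{\tau^+}=-\frac{c_2}{c_1}\Phi_1$ of Theorem~\ref{thm:jump}, citing only that $\partial_\nu$ commutes with $\partial_t$, so your regularity discussion is justification the paper never attempts. One correction to that extra part: at $t=\tau$ it is not continuity of $u$ that makes the problem for $\partial_t u^{(1)}$ well behaved, but the fact that the jump $-\frac{c_2}{c_1}\Phi_1$ lies in $H^2(\Omega)\cap H^1_0(\Omega)$. It vanishes on $\partial\Omega$ because the compatibility you invoke for $w^{(0)}$ at $t=0$ gives $\partial_t u^{(0)}(\cdot,0)=\partial_t h(\cdot,0)=\partial_t u_0(\cdot,0)$ there; without this, $\partial_t u^{(1)}$ would develop a boundary layer at $t=\tau^+$ and $\partial_t\partial_\nu u$ would blow up like $(t-\tau)^{-1/2}$ instead of having a finite jump.
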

This corollary provides the link between the interior jump and the boundary measurement.

Making use of this corollary, we can detect the jump in \( \partial_t(\partial_\nu u) \) from the boundary measurement \( \partial_\nu u|_{\partial\Omega \times (0,T)} \). Let:
\[
\mathcal{S} = \{ t \in (0,T) : \partial_t(\partial_\nu u)(\cdot, t) \text{ has a jump} \}.
\]
Then:
\[
\tau = \min(\mathcal{S}).
\]
That is, the delay is identified as the first time at which a jump is observed in the time derivative of the Neumann boundary measurement.
\end{proof}

\subsection{Variations in Proof for Different Coefficient Regimes}
The jump structure described above depends on the specific values of the coefficients \(c_1, c_2, d_1, d_2\). We now analyze the modifications required when certain coefficients vanish, as this affects the order of the derivative at which the jump first appears.

\textbf{Case 1: \(c_2 = 0\) (No Time-Derivative Delay)}

When \(c_2 = 0\), by \eqref{eq:discontinuityform}, the jump in \(\partial_t u\) vanishes:
\[
\left[\partial_t u\right]_{t=\tau^-}^{t=\tau^+} = 0.
\]

In this scenario, the first nontrivial jump occurs in \(\partial_{tt}u\), which by \eqref{eq:eq:discontinuityformdtt}, is given by:
\[
\left[\partial_{tt} u\right]_{t=\tau^-}^{t=\tau^+}
= \frac{d_2}{c_1} \Delta \Phi_1(x) + \frac{q(x,\tau)}{c_1} \Phi_1(x).
\]

In the further subcase where \(d_2 = 0\), this simplifies to
\[
\left[\partial_{tt} u\right]_{t=\tau^-}^{t=\tau^+}
= \frac{q(x,\tau)}{c_1} \Phi_1(x).
\]

Hence, in this case, we must use the second order time derivative of the boundary measurement \(\partial_{tt} (\partial_\nu u)|_{\partial\Omega \times (0,T)} \) to reconstruct the time delay \(\tau\).
\vspace{1em}

\noindent\textbf{Case 2: Elliptic Case (\(c_1 = 0, d_1 > 0\))}
When the time derivative term in the principal part vanishes, the equation loses its parabolic character and becomes elliptic in space at each time. This case requires a different regularity framework and yields a jump in the Laplacian rather than in the time derivative.

In this case, we need to consider a more regular initial condition:
\begin{equation}\label{eq:ellipticregassump}
u_0 \in H^1(-\tau,0; H^2(\Omega)) \cap H^2(-\tau,0; L^2(\Omega)).
\end{equation}

We first recall the standard regularity result (c.f. \cite[Theorem 6.3.4]{EvansBook} or \cite[Chapter 3]{LadyzhenskayaUraltsevaBook}) for elliptic equations: Let \( \Omega' \subset \mathbb{R}^n \) be a bounded domain with \( C^2 \) boundary, \( d_1 > 0 \) be a constant and \( p \in L^\infty(\Omega') \). 

Consider the elliptic boundary value problem:
\[
\begin{cases}
- d_1 \Delta u(x) - p(x) u(x) = F(x), & \quad\text{in }\Omega, \\
u(x) = h(x), & \quad\text{in }\partial\Omega',
\end{cases}
\]

The standard elliptic regularity theory gives the following result:
\begin{theorem}[Elliptic Regularity]\label{thm:EllipThm}
If \( F \in L^2(\Omega') \) and \( h \in H^{3/2}(\partial\Omega') \), then the unique solution satisfies:
    \[
    u \in H^2(\Omega').
    \]
    Moreover, the following estimate holds:
    \[
    \|u\|_{H^2(\Omega')} \le C \left( \|F\|_{L^2(\Omega')} + \|h\|_{H^{3/2}(\partial\Omega')} \right).
    \]
\end{theorem}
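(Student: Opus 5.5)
The proof of Theorem \ref{thm:EllipThm} reduces to the homogeneous Dirichlet case by lifting, followed by the standard existence and $H^2$-regularity theory. Since $\partial\Omega'$ is $C^2$ and $h\in H^{3/2}(\partial\Omega')$, the trace theorem gives a right inverse of the trace operator, $\mathcal{H}\in H^2(\Omega')$, with $\mathcal{H}|_{\partial\Omega'}=h$ and $\|\mathcal{H}\|_{H^2(\Omega')}\le C\|h\|_{H^{3/2}(\partial\Omega')}$. Setting $w=u-\mathcal{H}$, the problem becomes
\[
-d_1\Delta w - p w = \tilde F := F + d_1\Delta\mathcal{H} + p\mathcal{H}\quad\text{in }\Omega',\qquad w=0\ \text{on }\partial\Omega'.
\]
Here $\tilde F\in L^2(\Omega')$ and $\|\tilde F\|_{L^2}\le C(\|F\|_{L^2}+\|h\|_{H^{3/2}})$.

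Next I construct the weak solution $w\in H^1_0(\Omega')$ of the bilinear problem $B[w,\phi]=d_1\int\nabla w\cdot\nabla\phi-\int pw\phi$. Because $p$ has no sign, $B$ is not coercive in general. I would therefore use the Fredholm alternative, as in Evans' Theorem 6.2.4. Uniqueness is the hypothesis implicit in the phrase ``the unique solution'': I read the statement as assuming that $0$ is not a Dirichlet eigenvalue of $-d_1\Delta-p$. This holds, for instance, when $\|p\|_{L^\infty}<d_1\lambda_1(\Omega')$ or $p\le0$, and in those cases Lax--Milgram applies directly. Under this hypothesis the Fredholm alternative gives existence, together with the bound $\|w\|_{H^1}\le C\|\tilde F\|_{L^2}$. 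That bound follows from the bounded inverse of the Fredholm operator, or alternatively from a contradiction/compactness argument using Rellich's theorem.

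Then I upgrade to $H^2$. I move the zeroth-order term to the right-hand side, $-d_1\Delta w=\tilde F+pw\in L^2$, and invoke global boundary regularity (Evans, Theorem 6.3.4) on the $C^2$ domain. This yields $w\in H^2(\Omega')$ with $\|w\|_{H^2}\le C(\|\tilde F\|_{L^2}+\|w\|_{L^2})$. Combining this with the $H^1$ estimate removes the $\|w\|_{L^2}$ term. Finally, $u=w+\mathcal{H}$ satisfies the stated estimate by the triangle inequality.

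The main obstacle is the uniqueness and absorption of the lower-order term when $p$ has an arbitrary sign. The $H^2$ estimate itself is routine; what requires care is stating the non-resonance assumption explicitly and deriving the constant from the Fredholm/compactness argument rather than from coercivity.
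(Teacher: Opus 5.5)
Your proposal is correct and follows essentially the paper's route. The paper gives no proof of its own and simply cites standard $H^2$ boundary regularity (Evans, Theorem 6.3.4, or Ladyzhenskaya--Ural'tseva); your lifting of $h$ through a bounded right inverse of the trace, the Fredholm step, and the removal of $\|w\|_{L^2(\Omega')}$ are the routine details that citation leaves implicit. Your remark that ``the unique solution'' silently assumes that $0$ is not a Dirichlet eigenvalue of $-d_1\Delta-p$ is well taken: for $p\in L^\infty(\Omega')$ of arbitrary sign, existence and uniqueness can fail without it, and the constant $C$ then depends on $p$ itself rather than only on $\|p\|_{L^\infty(\Omega')}$.
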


When \(c_1 = 0\) but \(d_1 > 0\), the equation \eqref{eq:mainDPDE} becomes:
\begin{equation}\label{eq:EllipticCase}
c_2 \partial_t u(x,t-\tau) = d_1 \Delta u(x,t) + d_2 \Delta u(x,t-\tau) + p(x,t) u(x,t) + q(x,t) u(x,t-\tau).
\end{equation}

\textbf{Step 0:} On \(t \in [0,\tau]\), \(u(t-\tau) = u_0(t-\tau)\) and \(\partial_t u(t-\tau) = \partial_t u_0(t-\tau)\):
\begin{equation}\label{eq:EllipticCaseStep0}
d_1 \Delta u(x,t) + p(x,t) u(x,t) = c_2 \partial_t u_0(x,t-\tau) - d_2 \Delta u_0(x,t-\tau) - q(x,t) u_0(x,t-\tau).
\end{equation}
This is an elliptic equation for \(u(x,t)\) at each time \(t\). With Dirichlet boundary conditions and the regularity assumptions in \eqref{eq:mainregassump}, by Theorem \ref{thm:EllipThm}, the solution \(u^{(0)}\) is unique and 
\[u^{(0)}(\cdot,t)\in H^2(\Omega)\quad \text{ for each }t\in[0,\tau].\] Moreover, since the source term given by the right-hand-side of \eqref{eq:EllipticCaseStep0} is continuous in time by the regularity assumptions \eqref{eq:mainregassump} and \eqref{eq:ellipticregassump}, the solution \(u^{(0)}\) is continuous in time:
\[u^{(0)}\in C([0,\tau];H^2(\Omega)).\]

\textbf{Step 1:} On \(t \in [\tau, 2\tau]\):
\[
d_1 \Delta u(x,t) + p(x,t) u(x,t) = c_2 \partial_t u^{(0)}(x,t-\tau) - d_2 \Delta u^{(0)}(x,t-\tau) - q(x,t) u^{(0)}(x,t-\tau).
\]
Again, this is an elliptic equation with the same regularity on \([\tau,2\tau]\).

Taking the difference at \(t = \tau\):
\[
d_1 \Delta \left( u(x,\tau^+) - u(x,\tau^-) \right) = c_2 \Phi_1(x).
\]

Thus:
\begin{equation}\label{eq:EllipticCaseDeltaJumpForm}
\left[\Delta u\right]_{t=\tau^-}^{t=\tau^+} = \frac{c_2}{d_1} \Phi_1(x).
\end{equation}
In particular, if \(\Phi_1\not\equiv0\), then
\(\Delta u\) has a nontrivial jump at \(t=\tau\).
To detect this singularity from the boundary flux, we additionally
assume the boundary observability condition
\[
N(\Phi_1,0)\not\equiv0
\quad\text{on }\partial\Omega.
\]
Under this condition,
\[
[\partial_\nu u]_{\tau^-}^{\tau^+}
=
\frac{c_2}{d_1}N(\Phi_1,0)
\not\equiv0.
\]

\begin{remark}
The condition
\[
N(\Phi_1,0)\not\equiv0
\]
is an additional boundary observability condition. Indeed, a
nontrivial interior source does not necessarily produce a nontrivial
boundary normal derivative. For example, if
\(\varphi\in C_c^\infty(\Omega)\) is nonzero and
\(F=-d_1\Delta\varphi\), then the solution of
\[
-d_1\Delta w=F,\qquad w|_{\partial\Omega}=0,
\]
is \(w=\varphi\), for which
\[
F\not\equiv0,\qquad
\partial_\nu w|_{\partial\Omega}=0.
\]
Thus, the boundary observability condition cannot be omitted.
\end{remark}



\begin{remark}
It is instructive to contrast this elliptic case with the parabolic case. Although
\[
u^{(0)} \in C([0,\tau]; H^2(\Omega)).
\] and 
\[
u^{(1)} \in C([\tau, 2\tau]; H^2(\Omega)),
\]
at the interface \(t = \tau\), the source term may have a jump because the method of steps switches from using the initial history \(u_0\) to using the solution \(u^{(0)}\). Specifically,
\[
\left[F\right]_{t=\tau^-}^{t=\tau^+} = c_2 \Phi_1(x),
\]
where
\[
\Phi_1(x) = \partial_t u^{(0)}(x,0) - \partial_t u_0(x,0).
\]
Consequently, while \(u^{(0)},u^{(1)}\) are continuous in time on their respective time intervals, the Laplacian \(\Delta u\) has a jump at \(t = \tau\) if \(c_2 \Phi_1 \neq 0\). This is a characteristic feature of the elliptic regime.

This is because the main equation \eqref{eq:EllipticCase} is of an elliptic form. 
On the other hand, the general case in \eqref{eq:mainDPDE} is of parabolic form, and the standard parabolic result (Theorem \ref{thm:ParabolicThm} gives \(u\in C([0,T];H^1_0(\Omega))\) for \(u\) being the solution of \eqref{eq:mainDPDE}. 
\end{remark}
\vspace{1em}

\begin{table}[h]
\centering
\small
\begin{tabular}{|c|c|c|c|c|c|c|}
\hline
\textbf{Case} & \(c_1\) & \(c_2\) & \(d_1\) & \(d_2\) & \textbf{Type} & \textbf{First Jump} \\
\hline
General & \(>0\) & \(>0\) & \(\ge 0\) & \(\ge 0\) & Parabolic/ODE & \(\partial_t u\) at \(\tau\) \\
1 & \(>0\) & \(=0\) & \(\ge 0\) & \(\ge 0\) & Parabolic & \(\partial_{tt}u\) at \(\tau\) \\
2 & \(=0\) & \(>0\) & \(>0\) & \(\ge 0\) & Elliptic & \(\Delta u\) at \(\tau\) \\
\hline
\end{tabular}

\caption{Summary of coefficient regimes and their singularity structures.
The table indicates the first derivative (in time or space) that exhibits a jump at \(t=\tau\), which can be used to reconstruct the delay.}
\end{table}

\section{Recovery of \( p \) and \( q \)}\label{sec:recovercoef}
Having reconstructed the delay \(\tau\), we now address the recovery of the zeroth-order coefficients \(p\) and \(q\). As we will see, the structure of the problem on the initial interval \((0,\tau)\) provides a linear system for these coefficients, but a single measurement is insufficient to determine both uniquely.
On \( [0,\tau] \), from \eqref{eq:MainStep0}:
\[
    c_1 \partial_t u(x,t) + c_2 \partial_t u_0(x,t-\tau) = d_1 \Delta u(x,t) + d_2 \Delta u_0(x,t-\tau) + p(x,t) u(x,t) + q(x,t) u_0(x,t-\tau).
\]

Rearranging, we have:
\begin{equation}\label{eq:MainRecoverPQ}
p(x,t) u(x,t) + q(x,t) u_0(x,t-\tau) = H(x,t),
\end{equation}
where
\[
H(x,t) = c_1 \partial_t u(x,t) + c_2 \partial_t u_0(x,t-\tau) - d_1 \Delta u(x,t) - d_2 \Delta u_0(x,t-\tau).
\]



Thus, \eqref{eq:MainRecoverPQ} constitutes a single linear equation for two unknowns \( p(x,t) \) and \( q(x,t) \) at each time \( t \). To recover both coefficients, we require a second independent equation, which can be obtained from a second experiment for the same set of unknown coefficients \(p,q\).

\subsection{The Non-Uniqueness Construction}
Before presenting the stability result, we first demonstrate that a single measurement is insufficient in the general case where $p,q$ depends on both $x$ and $t$. The following construction shows that there is a gauge freedom in the coefficients: one can add a term to \(p\) and subtract a corresponding term from \(q\) without changing the solution or the boundary measurement.

\begin{theorem}[Non-Uniqueness with a Single Measurement]
Let \((p, q)\) be a coefficient pair that produces the solution \(u\) and the boundary measurement \(\partial_\nu u|_{\Sigma_T}\). Let \(r(x,t)\) be any function such that:
\[
r \in C^1([0,T]; L^\infty(\Omega)), \qquad r(x,t) \neq 0 \text{ on a set of positive measure}.
\]

Define:
\[
p'(x,t) = p(x,t) + r(x,t) u_0(x,t-\tau),
\]
\[
q'(x,t) = q(x,t) - r(x,t) u(x,t).
\]

Then \((p', q') \neq (p, q)\) in general, but \((p', q')\) produces the same solution \(u\) and hence the same boundary measurement \(\partial_\nu u|_{\Sigma_T}\).
\end{theorem}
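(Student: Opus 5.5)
The plan is a direct substitution argument on the initial layer, followed by a discussion of what ``produces the same solution'' means beyond $t=\tau$.

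\textbf{The initial layer $[0,\tau]$.} Fix the solution $u$ generated by $(p,q)$. On $[0,\tau]$ the delayed quantities are prescribed, so $u$ satisfies \eqref{eq:MainRecoverPQ}:
\[
p(x,t)u(x,t)+q(x,t)u_0(x,t-\tau)=H(x,t).
\]
Here $H$ is built only from $u$, $u_0$ and the constants $c_i,d_i$, and does not involve the coefficients. Substituting the primed coefficients gives
\[
p'u+q'u_0(\cdot,\cdot-\tau)=pu+q\,u_0(\cdot,\cdot-\tau)+r\,u_0(\cdot,\cdot-\tau)\,u-r\,u\,u_0(\cdot,\cdot-\tau)=H.
\]
So the same function $u$ satisfies \eqref{eq:MainStep0} with $(p',q')$, with the same initial and boundary data. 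By the uniqueness in Theorem~\ref{thm:ParabolicThm}, applied to \eqref{eq:MainStep0Rearrange2} with $p'$ in place of $p$, the solution for $(p',q')$ coincides with $u$ on $[0,\tau]$.

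\textbf{Later intervals.} The identical computation works on every interval of the method of steps. On $[k\tau,(k+1)\tau]$ one uses $u(x,t-\tau)$ in place of $u_0(x,t-\tau)$, so $r$ is paired with the delayed state $u(x,t-\tau)$ rather than $u_0(x,t-\tau)$. The statement's formula for $p'$ is written with $u_0(x,t-\tau)$, which is the initial-layer form. I would therefore read the gauge as
\[
p'=p+r\,u(x,t-\tau),\qquad q'=q-r\,u(x,t),
\]
where $u(\cdot,t-\tau)$ equals $u_0$ for $t\le\tau$. An induction on the step index $k$, using uniqueness on each step, then shows that the solutions agree on all of $(0,T)$. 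Consequently their Neumann traces agree on $\Sigma_T$, which is what the theorem needs, at least on $\Sigma_\tau$, the layer relevant to Section~\ref{sec:recovercoef}.

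\textbf{Admissibility and distinctness.} Two further points need checking.
\begin{itemize}
\item $(p',q')$ must be admissible, i.e.\ lie in $C^1([0,T];L^\infty(\Omega))$. This is immediate if $u$ and $u_0$ are bounded and $C^1$ in time. I would either assume this from the stated compatibility and regularity hypotheses, or choose $r$ supported where they are bounded (for instance $r$ a smooth cutoff). The weak form of the equation only needs $p'u,\ q'u(\cdot-\tau)\in L^2$, which is a milder requirement.
\item $(p',q')\neq(p,q)$ holds as soon as $r\,u\neq0$ or $r\,u_0(\cdot,\cdot-\tau)\neq0$ on a set of positive measure. This is what ``in general'' means in the statement; choosing $r$ supported where $u\neq0$ suffices.
\end{itemize}

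\textbf{Main obstacle.} The algebra is trivial, so the real difficulty is this regularity and admissibility bookkeeping, together with the consistency of the delayed argument beyond the first layer.
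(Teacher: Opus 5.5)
Your proposal is correct and follows the paper's proof. Both substitute $(p',q')$ into the initial-layer identity $pu+q\,u_0(\cdot,\cdot-\tau)=H$ and observe that the cross terms $r\,u\,u_0(\cdot,\cdot-\tau)$ cancel, so the same $u$ solves the problem with the primed coefficients. Your extra points are refinements the paper omits: explicitly invoking forward uniqueness, replacing $u_0(x,t-\tau)$ by $u(x,t-\tau)$ so the gauge makes sense and the induction goes through for $t>\tau$, and requiring $r\,u\neq0$ or $r\,u_0(\cdot,\cdot-\tau)\neq0$ on a set of positive measure for distinctness. They matter because the paper's proof only checks the identity on $[0,\tau]$ while asserting that the flux agrees on all of $\Sigma_T$.
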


\begin{proof}
We verify that \((p', q')\) satisfies the same equation \eqref{eq:MainRecoverPQ} as \((p, q)\).

For \((p, q)\), we have:
\[
p u + q u_0 = H.
\]

For \((p', q')\), we compute:
\[
\begin{aligned}
p' u + q' u_0
&= \bigl( p + r u_0 \bigr) u + \bigl( q - r u \bigr) u_0 \\
&= p u + r u_0 u + q u_0 - r u u_0 \\
&= p u + q u_0 + r u u_0 - r u u_0 \\
&= p u + q u_0 \\
&= H.
\end{aligned}
\]

Thus \((p', q')\) satisfies the same equation \eqref{eq:MainRecoverPQ} with the same solution \(u\). Since the solution \(u\) is unchanged, the boundary measurement \(\partial_\nu u|_{\Sigma_T}\) is also unchanged.

However, if \(r\) is not identically zero, then:
\[
p' - p = r u_0 \neq 0, \qquad q' - q = -r u \neq 0.
\]

So \((p', q') \neq (p, q)\). Therefore, a single measurement does not determine the coefficients uniquely.
\end{proof}

\begin{remark}
    A concrete construction for \(r\) is as follows: Let \(\Omega = (0,1)\), \(\tau = 1\), and suppose we have a solution \(u(x,t)\) with \(u_0(x,t) \neq 0\) and \(u(x,t) \neq 0\) for \((x,t) \in \Omega \times (0,\tau)\). Choose:
\[
r(x,t) = \varepsilon \sin(\pi x) \cos(\pi t),
\]
where \(\varepsilon > 0\) is small.
Then:
\[
p'(x,t) = p(x,t) + \varepsilon \sin(\pi x) \cos(\pi t) u_0(x,t-\tau),
\]
\[
q'(x,t) = q(x,t) - \varepsilon \sin(\pi x) \cos(\pi t) u(x,t).
\]

Then, one can easily verify that \((p', q') \neq (p, q)\) for \(\varepsilon \neq 0\), but both pairs produce the same solution \(u\) and the same boundary measurement.
\end{remark}


To overcome the non-uniqueness shown above, we need extra assumptions, and a second independent measurement.
We now turn to a refined stability analysis for the coefficient \(p(x),q(x)\) is known. The key idea is to use the initial layer to derive an explicit relation between the unknown coefficient difference and the initial trace of the time derivative of the solution difference. This relation, combined with a Carleman estimate, yields a Lipschitz stability bound in terms of the boundary measurement.

\subsection{Proof of Theorem}

\subsubsection{Step 1: Reduction to the initial layer}

Set
\[
\pi:=p_1-p_2,
\qquad
\chi:=q_1-q_2,
\]
and, for each experiment $i=1,2$, define 
\[v^{[i]}:=u^{[i]}_1-u^{[i]}_2.\] Since, within each experiment, the two solutions correspond to the same prescribed history and the same Dirichlet boundary data, we have
\begin{align}
v^{[i]}(x,t)=0
&\quad\text{on }\partial\Omega\times(-\tau,T),
\label{eq:v-boundary}\\
v^{[i]}(x,t)=0
&\quad\text{in }\Omega\times[-\tau,0].
\label{eq:v-history}
\end{align}
Subtracting the two equations corresponding to $u^{[i]}_1$ and $u^{[i]}_2$ gives
\begin{equation}
\label{eq:v-full}
\begin{aligned}
    c_1\partial_tv^{[i]}
    +c_2\partial_tv^{[i]}(x,t-\tau)
    &=d_1\Delta v^{[i]}
    +d_2\Delta v^{[i]}(x,t-\tau)
    \\&\quad+p_1v^{[i]}
    +q_1v^{[i]}(x,t-\tau)
    \\&\quad+\pi(x)u_2^{[i]}(x,t)+\chi(x)u_2^{[i]}(x,t-\tau).
\end{aligned}
\end{equation}
This difference equation holds for all $t \in (0,T)$. On the initial layer $(0,\tau)$, the shifted argument $t-\tau$ falls within the prescribed historical interval $(-\tau,0)$, so $u_2^{[i]}(x,t-\tau) = u_0^{[i]}(x,t-\tau)$ and all delayed terms of $v^{[i]}$ vanish identically. For $0<t<\tau$, one has $t-\tau\in(-\tau,0)$. Hence, by \eqref{eq:v-history}--\eqref{eq:v-boundary},
\[
v^{[i]}(x,t-\tau)=0,
\]
hence
\[\partial_t v^{[i]}(x,t-\tau)=0, \qquad
\Delta v(x,t-\tau)=0.
\]
This makes sense by the regularity of $u^{[i]}\in C([0,\tau]; H^1_0(\Omega))$ at the initial step $(0,\tau)$ by the standard parabolic theory given in Theorem \ref{thm:ParabolicThm}. 
Moreover, 
\[u_2^{[i]}(x,t-\tau)=u_0^{[i]}(x,t-\tau)\]
Therefore, on the initial layer $Q_\tau$ , the delayed terms disappear identically, and \eqref{eq:v-full} reduces to
\begin{equation}
\label{eq:v-initial-layer}
c_1\partial_tv^{[i]}-d_1\Delta v^{[i]}-p_1v
=
\pi u_2^{[i]}+\chi u_0^{[i]}(x,t-\tau)
\qquad\text{in }Q_\tau.
\end{equation}
This reduction to a non-delayed equation is the crucial simplification that makes the subsequent Carleman estimate applicable. We emphasize that no restriction $c_2,d_2 = 0$ is required in this reduction. The delayed difference terms vanish identically on the initial layer because the two coefficient configurations share the same prescribed history. Next, define
\[
w^{[i]}:=\partial_tv^{[i]}.
\]
Since $p_1=p_1(x)$, $\pi=\pi(x)$ and $\chi=\chi(x)$ are independent of time, differentiating
\eqref{eq:v-initial-layer} yields
\begin{equation}
\label{eq:w-equation}
c_1\partial_tw^{[i]}-d_1\Delta w^{[i]}-p_1w^{[i]}
=
\pi \partial_tu^{[i]}_2+\chi\partial_tu_0^{[i]}(x,t-\tau)
\qquad\text{in }Q_\tau.
\end{equation}

We now derive an algebraic relation between the coefficient differences and the initial traces $w^{[i]}(\cdot,0)$. Because $v^{[i]}=0$ on $\partial\Omega\times(0,\tau)$ and the boundary data are time-independent with respect to the coefficient,
\begin{equation}\label{eq:tracew}
    w^{[i]}=0\qquad\text{on }\Sigma_\tau,
\end{equation}
the assumed regularity implies
\begin{equation}\label{eq:traceLapw}
    \Delta w^{[i]}(\cdot,0)=0.
\end{equation}
Taking the right-hand trace at $t=0$ in \eqref{eq:v-full}, and using
\eqref{eq:tracew}--\eqref{eq:traceLapw}, yields
\[
c_1w^{[i]}(x,0)
=
\pi(x)u_2^{[i]}(x,0)+\chi(x) u_0^{[i]}(x,-\tau)
\]

Since
\[
u_2^{[i]}(x,0)=u_0^{[i]}(x,0),
\]
we obtain
\begin{equation}
c_1
\begin{pmatrix}
w^{[1]}(x,0)\\
w^{[2]}(x,0)
\end{pmatrix}
=
A_0(x)
\begin{pmatrix}
\pi(x)\\
\chi(x)
\end{pmatrix}.
\end{equation}

By assumption \eqref{eq:pqnondeg},
\[
|A_0(x)\xi|
\ge
\mu_0|\xi|,
\qquad
\xi\in\mathbb{R}^2.
\]
Consequently,
\[
c_1^2
\sum_{i=1}^2
|w^{[i]}(x,0)|^2
\ge
\mu_0^2
\left(
|\pi(x)|^2+|\chi(x)|^2
\right),
\tag{4.29}
\]
and hence
\begin{equation}\label{eq:r-w0}
|\pi(x)|^2+|\chi(x)|^2
\le
\frac{c_1^2}{\mu_0^2}
\sum_{i=1}^2
|w^{[i]}(x,0)|^2.
\end{equation}


Thus, to obtain the desired stability result \eqref{eq:final-stability}, it remains to estimate the initial trace of $w^{[i]}$ by the measured boundary data.

\begin{remark}[Necessity of two experiments for the simultaneous recovery of $p$ and $q$]
The requirement of two experiments arises precisely from the algebraic
reconstruction of the two unknown coefficient differences
\[
\pi:=p_1-p_2,
\qquad
\chi:=q_1-q_2.
\]
Indeed, for a single experiment, the initial-layer equation yields at
$t=0$ the relation
\[
c_1 w(x,0)
=
\pi(x)u_0(x,0)
+
\chi(x)u_0(x,-\tau).
\]
Thus, at each fixed $x\in\Omega$, one obtains only one scalar equation
for the two unknown quantities $\pi(x)$ and $\chi(x)$. Consequently, even if
the Carleman estimate determines $w(\cdot,0)$ exactly, it determines only
the single linear combination
\[
\pi(x)u_0(x,0)+\chi(x)u_0(x,-\tau),
\]
and, in general, it cannot determine $\pi(x)$ and $\chi(x)$ separately.

More explicitly, set
\[
a(x):=u_0(x,0),
\qquad
b(x):=u_0(x,-\tau).
\]
If $(a(x),b(x))\neq(0,0)$, then the nonzero pair
\[
\bigl(\pi(x),\chi(x)\bigr)=\bigl(b(x),-a(x)\bigr)
\]
satisfies
\[
a(x)\pi(x)+b(x)\chi(x)=0.
\]
Hence, for this nonzero pair,
\[
w(x,0)=0,
\]
while
\[
|\pi(x)|^2+|\chi(x)|^2
=
|a(x)|^2+|b(x)|^2>0.
\]
Therefore, no estimate of the form
\[
|\pi(x)|^2+|\chi(x)|^2
\leq C|w(x,0)|^2
\]
can hold in general for a single experiment, regardless of the
Carleman estimate used to control $w(\cdot,0)$.

With two experiments, on the other hand, the corresponding initial-time
relations form the $2\times2$ linear system
\[
c_1
\begin{pmatrix}
w^{[1]}(x,0)\\[1mm]
w^{[2]}(x,0)
\end{pmatrix}
=
A_0(x)
\begin{pmatrix}
\pi(x)\\[1mm]
\chi(x)
\end{pmatrix},
\]
where
\[
A_0(x):=
\begin{pmatrix}
u_0^{[1]}(x,0) & u_0^{[1]}(x,-\tau)\\
u_0^{[2]}(x,0) & u_0^{[2]}(x,-\tau)
\end{pmatrix}.
\]
Under the uniform nondegeneracy assumption
\[
|A_0(x)\xi|\geq\mu_0|\xi|,
\qquad
x\in\overline{\Omega},\quad
\xi\in\mathbb R^2,
\]
we obtain
\[
\begin{aligned}
\mu_0^2\bigl(|\pi(x)|^2+|\chi(x)|^2\bigr)
&\leq
\left|
A_0(x)
\begin{pmatrix}
\pi(x)\\[1mm]
\chi(x)
\end{pmatrix}
\right|^2\\
&=
c_1^2
\left(
|w^{[1]}(x,0)|^2+
|w^{[2]}(x,0)|^2
\right).
\end{aligned}
\]
Consequently,
\[
|\pi(x)|^2+|\chi(x)|^2
\leq
\frac{c_1^2}{\mu_0^2}
\sum_{i=1}^2
|w^{[i]}(x,0)|^2.
\]

Thus, the need for two experiments is not a consequence of the
Carleman estimate itself. The Carleman estimate is applied separately
to each experiment to estimate the corresponding initial trace
$w^{[i]}(\cdot,0)$. The use of two experiments is required at the
subsequent algebraic reconstruction step, where two independent scalar
relations are needed to distinguish the two unknown coefficient
differences $\pi$ and $\chi$.

If one of the two coefficients is known a priori, this obstruction
disappears. For example, if $q_1=q_2$, so that $\chi=0$, then a single
experiment gives
\[
c_1w(x,0)=\pi(x)u_0(x,0),
\]
and the usual nondegeneracy condition
\[
|u_0(x,0)|\geq\kappa_0>0
\]
implies
\[
|\pi(x)|^2
\leq
\frac{c_1^2}{\kappa_0^2}|w(x,0)|^2.
\]
Thus, one experiment is sufficient for the recovery of a single
unknown coefficient, whereas two experiments are, in general,
necessary for the simultaneous Lipschitz recovery of the two
independent coefficients $p$ and $q$.
\end{remark}

\subsubsection{Step 2: Carleman Estimate}
We now establish a Carleman estimate for the non-delayed operator \(P_p = c_1\partial_t - d_1\Delta - p(x)\). This estimate provides a weighted energy inequality that controls the initial trace of \(w\) in terms of the source term and the Neumann boundary data.

Choose
\(
x_0\in\mathbb R^n\setminus\overline\Omega
\)
and define
\[
\psi(x):=|x-x_0|^2.
\]
Set
\[
\psi_-:=\min_{\overline\Omega}\psi,
\qquad
\psi_+:=\max_{\overline\Omega}\psi.
\]
Since $x_0\notin\overline\Omega$,
\begin{equation}\label{eq:grad-psi-lower}
|\nabla\psi(x)|=2|x-x_0|
\ge 2\rho_0,\end{equation}
where 
\(\rho_0:=\operatorname{dist}(x_0,\overline\Omega)>0.
\)

For $\lambda\ge1$, set
\begin{equation}\label{eq:psi}
\Psi(x):=e^{\lambda\psi(x)}.
\end{equation}
Then
\begin{align}
\nabla\Psi
&=
\lambda e^{\lambda\psi}\nabla\psi,
\label{eq:grad-psi-phi}\\
D^2\Psi
&=
e^{\lambda\psi}
\left(
2\lambda I_n
+
\lambda^2\nabla\psi\otimes\nabla\psi
\right),\\
\Delta\Psi
&=
e^{\lambda\psi}
\left(
2n\lambda
+
\lambda^2|\nabla\psi|^2
\right).
\end{align}

In particular, 
\begin{equation}
\label{eq:grad-lower}
|\nabla\Psi|
\ge
2\rho_0\lambda e^{\lambda\psi},
\end{equation}
and, for any $\zeta\in\mathbb{R}^n$, the Hessian satisfies 
\begin{equation}
\label{eq:hessian-lower}
D^2\Psi[\zeta,\zeta] = \zeta^T(D^2\Psi)\zeta=e^{\lambda\psi}\left(2\lambda |\zeta|^2+\lambda^2(\nabla\psi\cdot\zeta)^2\right)
\ge
2\lambda e^{\lambda\psi}|\zeta|^2.
\end{equation}

We now choose $\kappa>0$ so that
\begin{equation}
\label{eq:kappa-choice}
\kappa\tau>\Psi_+-\Psi_-,
\end{equation}
where
\[
\Psi_-:=e^{\lambda\psi_-},
\qquad
\Psi_+:=e^{\lambda\psi_+}.
\]
Such a $\kappa$ exists for every fixed $\tau>0$.

Define
\begin{equation}
\label{eq:phi}
\varphi(x,t):=\kappa t-\Psi(x).
\end{equation}
Then
\begin{equation}\label{eq:phi-Psi-identity}
\varphi_t=\kappa,
\qquad
\nabla\varphi=-\nabla\Psi,
\qquad
\Delta\varphi=-\Delta\Psi.
\end{equation}

Furthermore,
\begin{equation}
\label{eq:phi-difference}
\varphi(x,\tau)=\varphi(x,0)+\kappa\tau.
\end{equation}

Then, the following Carleman estimate holds:
\begin{lemma}[Initial-trace Carleman estimate]
\label{lem:carleman}
Define
\[
P_pz:=c_1z_t-d_1\Delta z-p(x)z,
\]
where
\[
\|p\|_{W^{1,\infty}(\Omega)}\le M.
\]
Let $\Psi$ and $\varphi$ be defined by
\eqref{eq:psi} and \eqref{eq:phi} respectively, with $\lambda\ge1$ and $\kappa>0$ fixed.

Then, there exist constants
\[
\lambda_0\ge1,
\qquad
s_0=s_0(\lambda,\kappa,M,\Omega,c_1,d_1)\ge1,
\qquad
C>0,
\]
uniform with respect to all $p\in\mathcal P_M$, 
such that, for every $\lambda\ge\lambda_0$, every $s\ge s_0$, and every
\[
z\in H^1(0,\tau;L^2(\Omega))
\cap L^2(0,\tau;H^2(\Omega))
\]
with
\[
z=0
\qquad\text{on }\Sigma_\tau,
\]
one has
\begin{equation}
\begin{aligned}
&
s\lambda
\int_{Q_\tau}
e^{-2s\varphi}|\nabla z|^2\,dx\,dt
+
s^3\lambda^4
\int_{Q_\tau}
e^{-2s\varphi}|z|^2\,dx\,dt
+
s^2\lambda^2
\int_\Omega
e^{-2s\varphi(x,0)}
|z(x,0)|^2\,dx\\
&\le
C
\int_{Q_\tau}
e^{-2s\varphi}
|P_pz|^2\,dx\,dt
+
Cs\lambda e^{\lambda\psi_+}
\int_{\Sigma_\tau}
e^{-2s\varphi}
|\partial_\nu z|^2\,d\sigma\,dt
+
Cs^2\lambda^2
\int_\Omega
e^{-2s\varphi(x,\tau)}
|z(x,\tau)|^2\,dx.
\label{eq:carleman}
\end{aligned}
\end{equation}
\end{lemma}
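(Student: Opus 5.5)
We prove \eqref{eq:carleman} by the standard conjugation argument. For $z$ as in the statement put $y:=e^{-s\varphi}z$, so $z=e^{s\varphi}y$ and $y=0$ on $\Sigma_\tau$, and set $f:=e^{-s\varphi}P_pz$. By \eqref{eq:phi-Psi-identity} and $\nabla\varphi=-\nabla\Psi$,
\[
e^{-s\varphi}P_p(e^{s\varphi}y)=c_1y_t+c_1s\kappa y-d_1\Delta y+2d_1s\nabla\Psi\cdot\nabla y+d_1s\Delta\Psi\, y-d_1s^2|\nabla\Psi|^2y-py .
\]
We split this as $L_1y+L_2y=f+py+R y$. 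Here the formally symmetric part is $L_1y:=-d_1\Delta y-d_1s^2|\nabla\Psi|^2y$. The antisymmetric part is $L_2y:=c_1y_t+2d_1s\nabla\Psi\cdot\nabla y+d_1s\Delta\Psi\,y$. The remainder $Ry=-c_1s\kappa y$ is lower order. We then expand $\|L_1y+L_2y\|^2_{L^2(Q_\tau)}\ge 2(L_1y,L_2y)_{L^2(Q_\tau)}$ and compute the cross term by integration by parts in $x$ and $t$.

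The cross term produces four kinds of contributions.
\begin{itemize}
\item The pair $(-d_1\Delta y,\,2d_1s\nabla\Psi\cdot\nabla y)$ gives $4d_1^2s\int D^2\Psi[\nabla y,\nabla y]$, up to terms absorbed by the $\Delta\Psi$ correction. It also gives a boundary term $-2d_1^2s\int_{\Sigma_\tau}\partial_\nu\Psi\,|\partial_\nu y|^2$, since $\nabla y=\partial_\nu y\,\nu$ on $\Sigma_\tau$.
\item The pair $(-d_1s^2|\nabla\Psi|^2y,\,2d_1s\nabla\Psi\cdot\nabla y)$ gives $2d_1^2s^3\int \operatorname{div}(|\nabla\Psi|^2\nabla\Psi)|y|^2$. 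Its leading part is $\ge c s^3\lambda^4e^{3\lambda\psi}\rho_0^4|y|^2$, using \eqref{eq:grad-lower}.
\item The terms involving $c_1y_t$ integrate exactly in time. They give $\frac{c_1d_1}{1}\int_\Omega|\nabla y|^2\big|_0^\tau$ and $-c_1d_1s^2\int_\Omega|\nabla\Psi|^2|y|^2\big|_0^\tau$, which are the endpoint terms at $t=0$ and $t=\tau$.
\item The Hessian bound \eqref{eq:hessian-lower} gives the gradient term $s\lambda\int e^{\lambda\psi}|\nabla y|^2$.
\end{itemize}
We choose $\lambda\ge\lambda_0$ large so that the $\lambda^4$ term dominates the $\lambda^3$ error terms. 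We then choose $s\ge s_0$ large to absorb $\|py\|^2\le M^2\|y\|^2$ and $\|Ry\|^2\le Cs^2\kappa^2\|y\|^2$ into $s^3\lambda^4\|y\|^2$. The constants depend on $p$ only through $M$, which gives the uniformity over $\mathcal P_M$.

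The delicate point is the sign of the endpoint terms at $t=0$. The term $+c_1d_1s^2\int_\Omega|\nabla\Psi|^2|y(x,0)|^2$ comes with a favourable sign, and since $|\nabla\Psi|^2\ge 4\rho_0^2\lambda^2e^{2\lambda\psi}\ge c\lambda^2$ it produces the desired $s^2\lambda^2\int e^{-2s\varphi(x,0)}|z(x,0)|^2$. The companion term $-c_1d_1\int|\nabla y(\cdot,0)|^2$ has the wrong sign.

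I would try to avoid it by placing the $y_t$ cross term differently. Pair $c_1y_t$ only with $-d_1s^2|\nabla\Psi|^2y$, and move $-d_1\Delta y$ into the error side through a weighted version with the $s\Delta\Psi$ term. Alternatively, take the $L^2$ inner product of the equation with $y$ itself (an energy identity) to control $|\nabla y(0)|^2$ by interior terms. Failing that, I would use $\varphi$ increasing in $t$ ($\kappa>0$): the weight $e^{-2s\varphi}$ is largest at $t=0$, and the estimate only needs $y(0)$ in $L^2$. I expect the bookkeeping of these $t=0$ and $t=\tau$ terms to be the main obstacle.

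The $t=\tau$ endpoint terms are bounded above by $Cs^2\lambda^2\int e^{-2s\varphi(x,\tau)}|z(x,\tau)|^2$ plus absorbable gradient pieces. The boundary term is bounded by $Cs\lambda e^{\lambda\psi_+}\int_{\Sigma_\tau}|\partial_\nu y|^2$, using $|\partial_\nu\Psi|\le C\lambda e^{\lambda\psi_+}$. Finally we return to $z$. Since $\nabla y=e^{-s\varphi}(\nabla z-s\nabla\varphi z)$, the gradient term for $y$ together with the $s^3$ term yields the $s\lambda\int e^{-2s\varphi}|\nabla z|^2$ bound, and $\partial_\nu y=e^{-s\varphi}\partial_\nu z$ on $\Sigma_\tau$. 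This gives \eqref{eq:carleman}.
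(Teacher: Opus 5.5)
Your setup is the standard one and is carried out correctly as far as it goes. Putting $c_1y_t$ in the antisymmetric part is right, $D^2\Psi$ does give the $s\lambda$ gradient and $s^3\lambda^4$ zero-order coercivity, and the boundary and $t=\tau$ terms are handled properly. But the proposal leaves a genuine gap: the $t=0$ gradient term is never controlled, and it cannot be. The cross term
\[
2\bigl(-d_1\Delta y,\,c_1y_t\bigr)_{L^2(Q_\tau)}=c_1d_1\Bigl[\|\nabla y(\cdot,t)\|_{L^2(\Omega)}^2\Bigr]_{t=0}^{t=\tau}
\]
leaves $-c_1d_1\|\nabla y(\cdot,0)\|_{L^2(\Omega)}^2$ with the wrong sign, and none of your three remedies removes it:
\begin{enumerate}
\item Taking $-d_1\Delta y$ out of $L_1$ kills the term that produces the gradient coercivity. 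It also leaves $d_1^2\|\Delta y\|^2$ with an $O(1)$ weight that nothing on the left controls.
\item An energy identity with multiplier $y$ controls $\int_0^\tau\|\nabla y\|^2\,dt$, not the trace at $t=0$. Bounding the initial gradient by later-time quantities is a backward estimate.
\item The monotonicity of $\varphi$ in $t$ does not change the sign of this term.
\end{enumerate}
Returning to $z$, an integration by parts gives $\|\nabla y(\cdot,0)\|^2=\int_\Omega e^{2s\Psi}\bigl(|\nabla z(x,0)|^2-(s\Delta\Psi+s^2|\nabla\Psi|^2)|z(x,0)|^2\bigr)dx$. So your computation actually yields the $L^2$ trace of $z(\cdot,0)$ with an even better constant. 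What remains is $-c_1d_1\int_\Omega e^{-2s\varphi(x,0)}|\nabla z(x,0)|^2\,dx$, and nothing absorbs it.

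This is not a bookkeeping problem: the estimate as stated is false. For fixed $s,\lambda,\kappa$ the weights are bounded above and below on $\overline{Q_\tau}$. The lemma would therefore imply $\|z(\cdot,0)\|_{L^2(\Omega)}^2\le C'\bigl(\|P_pz\|_{L^2(Q_\tau)}^2+\|\partial_\nu z\|_{L^2(\Sigma_\tau)}^2+\|z(\cdot,\tau)\|_{L^2(\Omega)}^2\bigr)$. Take $n=1$, $\Omega=(0,\pi)$, $c_1=d_1=1$, $p=0$ and
\[
z_k(x,t)=e^{-k^2t}\sin kx-e^{-(k+2)^2t}\sin (k+2)x .
\]
Then $P_pz_k=0$, $\|z_k(\cdot,0)\|^2=\pi$ and $\|z_k(\cdot,\tau)\|^2\le\pi e^{-2k^2\tau}$. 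At both endpoints $|\partial_\nu z_k|=|ke^{-k^2t}-(k+2)e^{-(k+2)^2t}|$, whose square integrates over $(0,\infty)$ to $4/(k^2+(k+2)^2)$. Letting $k\to\infty$ gives a contradiction. What your computation does prove is the estimate with $c_1d_1\int_\Omega e^{-2s\varphi(x,0)}|\nabla z(x,0)|^2\,dx$ added to the right-hand side.

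The paper's proof does not contain an idea you missed. It evaluates the cross term of $c_1y_t$ with $-d_1s^2|\nabla\Psi|^2y$ exactly. It then disposes of the remaining cross terms containing $c_1y_t$ by Young's inequality; among them is the one with $-d_1\Delta y$, which is precisely your endpoint identity. That step would require control of $\|\Delta y\|^2$, and, for the term with $2d_1s\nabla\Psi\cdot\nabla y$, of $s^2\lambda^2\int e^{2\lambda\psi}|\nabla y|^2$; the left-hand side provides neither. In the stability proof the extra term involves $\nabla w(\cdot,0)$, hence $\nabla(p_1-p_2)$ and $\nabla(q_1-q_2)$, so it cannot be absorbed there either. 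This is why Lipschitz-stability arguments of Imanuvilov--Yamamoto type take the observation time strictly inside the time interval.
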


\begin{proof}
The proof follows the standard method for Carleman estimates \cite{BukhgeimKlibanov1981Carleman,Klibanov1992IPCarleman,Yamamoto2009IPCarlemanParabolic}: conjugate the operator with the exponential weight \(e^{s\varphi}\), integrate by parts, and collect the positive definite terms. The spatial weight \(\Psi\) is chosen to ensure coercivity of the principal terms.

Set
\[
y=e^{-s\varphi}z,
\qquad
z=e^{s\varphi}y.
\]
A direct differentiation gives
\[
e^{-s\varphi}z_t
=
y_t+s\varphi_ty
\]
and
\[
e^{-s\varphi}\Delta z
=
\Delta y
+
2s\nabla\varphi\cdot\nabla y
+
s(\Delta\varphi)\,y
+
s^2|\nabla\varphi|^2y.
\]
Consequently,
\begin{equation}
e^{-s\varphi}P_pz
=
c_1y_t-d_1\Delta y
-2d_1s\nabla\varphi\cdot\nabla y+
\left(
c_1s\varphi_t
-d_1s(\Delta\varphi)
-d_1s^2|\nabla\varphi|^2
-p
\right)y.
\label{eq:conjugated}
\end{equation}

Write
\begin{align*}
A&:=-d_1\Delta y,
&
B&:=-d_1s^2|\nabla\varphi|^2y,\\
C&:=c_1s\kappa y,
&
D&:=c_1y_t,\\
E&:=-2d_1s\nabla\varphi\cdot\nabla y,
&
F&:=-d_1s(\Delta\varphi)y,\\
G&:=-py.
\end{align*}
Then, by \eqref{eq:phi-Psi-identity},
\[
e^{-s\varphi}P_pz=A+B+C+D+E+F+G.
\]

We estimate
\[
\int_{Q_\tau}|A+B+C+D+E+F+G|^2\,dx\,dt.
\]
We compute the cross terms that generate the principal coercive contributions (highest powers of $s$) by integration by parts. All remaining cross terms are estimated by Cauchy-Schwarz and Young's inequalities and absorbed into these coercive terms for sufficiently large $s$ and $\lambda$.

First, by \eqref{eq:phi-Psi-identity},
\begin{equation}
2\int_{Q_\tau}AE\,dx\,dt
=
-4d_1^2s
\int_{Q_\tau}
\Delta y\,\nabla\Psi\cdot\nabla y\,dx\,dt.
\end{equation}
Since $y=0$ on $\Sigma_\tau$, integration by parts gives
\begin{multline}
2\int_{Q_\tau}AE\,dx\,dt
\\=
4d_1^2s
\int_{Q_\tau}
D^2\Psi[\nabla y,\nabla y]
\,dx\,dt
-
2d_1^2s
\int_{Q_\tau}
\Delta\Psi|\nabla y|^2
\,dx\,dt-
2d_1^2s
\int_{\Sigma_\tau}
\partial_\nu\Psi|\partial_\nu y|^2
\,d\sigma\,dt.
\label{eq:AE}
\end{multline}

Next, again by \eqref{eq:phi-Psi-identity},
\begin{equation}
2\int_{Q_\tau}AF\,dx\,dt
=
-2d_1^2s
\int_{Q_\tau}
\Delta y\,\Delta\Psi\,y
\,dx\,dt.
\end{equation}
Another integration by parts, using $y=0$ on the lateral boundary, yields
\begin{equation}
2\int_{Q_\tau}AF\,dx\,dt
=
2d_1^2s
\int_{Q_\tau}
\Delta\Psi|\nabla y|^2
\,dx\,dt-
d_1^2s
\int_{Q_\tau}
\Delta^2\Psi\,y^2
\,dx\,dt.
\label{eq:AF}
\end{equation}
Adding \eqref{eq:AE} and \eqref{eq:AF},
\begin{multline}
2\int_{Q_\tau}(AE+AF)\,dx\,dt
\\=
4d_1^2s
\int_{Q_\tau}
D^2\Psi[\nabla y,\nabla y]
\,dx\,dt
-
d_1^2s
\int_{Q_\tau}
\Delta^2\Psi\,y^2
\,dx\,dt
-
2d_1^2s
\int_{\Sigma_\tau}
\partial_\nu\Psi|\partial_\nu y|^2
\,d\sigma\,dt.
\label{eq:gradient-identity}
\end{multline}

The terms with the highest powers of $s$ are obtained from $BE$ and $BF$:
\begin{equation}
2\int_{Q_\tau}BE\,dx\,dt
=
-4d_1^2s^3
\int_{Q_\tau}
|\nabla\Psi|^2
\nabla\Psi\cdot y\nabla y
\,dx\,dt
=
2d_1^2s^3
\int_{Q_\tau}
\operatorname{div}
\left(
|\nabla\Psi|^2\nabla\Psi
\right)
y^2
\,dx\,dt,
\end{equation}
\[
2\int_{Q_\tau}BF\,dx\,dt
=
-2d_1^2s^3
\int_{Q_\tau}
|\nabla\Psi|^2\Delta\Psi\,y^2
\,dx\,dt.
\]
Using
\[
\operatorname{div}
\left(
|\nabla\Psi|^2\nabla\Psi
\right)
=
2D^2\Psi[\nabla\Psi,\nabla\Psi]
+
|\nabla\Psi|^2\Delta\Psi,
\]
we obtain 
\begin{equation}
2\int_{Q_\tau}(BE+BF)\,dx\,dt
=
4d_1^2s^3
\int_{Q_\tau}
D^2\Psi[\nabla\Psi,\nabla\Psi]y^2
\,dx\,dt.
\label{eq:zero-identity}
\end{equation}

Finally,
\begin{equation}
\begin{aligned}
2\int_{Q_\tau}BD\,dx\,dt
&=
-2c_1d_1s^2
\int_{Q_\tau}
|\nabla\Psi|^2yy_t\,dx\,dt
\\&=
c_1d_1s^2
\int_\Omega
|\nabla\Psi|^2|y(x,0)|^2\,dx
-
c_1d_1s^2
\int_\Omega
|\nabla\Psi|^2|y(x,\tau)|^2\,dx.
\end{aligned}
\label{eq:initial-identity}
\end{equation}

We now estimate the terms on the right-hand-side.

For a fixed $x_0$ and $\lambda\ge1$, since
\[
\psi(x)=|x-x_0|^2,
\]
for every $1\le k\le4$, there exists $C_k>0$ such that
\begin{equation}
\label{eq:weight-derivatives}
|D^k\Psi|
\le
C_k\lambda^k e^{\lambda\psi}.
\end{equation}
In particular, every individual component of the fourth-order derivative tensor of $\Psi$
\[
D^4\Psi
=
\left(
\partial_{ijkl}\Psi
\right)_{1\le i,j,k,l\le n}
\]
is bounded by its Euclidean
tensor norm:
\[
\left|\partial_{ijkl}\Psi\right|
\le
\left|D^4\Psi\right|.
\]
Since
\[
\Delta^2\Psi
=
\Delta(\Delta\Psi)
=
\sum_{j=1}^n\partial_{jj}
\left(
\sum_{i=1}^n\partial_{ii}\Psi
\right)
=
\sum_{i,j=1}^n
\partial_{jjii}\Psi,
\]
by the triangle inequality,
\[
\left|\Delta^2\Psi\right|
\le
\sum_{i,j=1}^n
\left|\partial_{jjii}\Psi\right|
\le
\sum_{i,j=1}^n
\left|D^4\Psi\right|=
n^2\left|D^4\Psi\right|.
\]
Consequently, 
\[
\left|\Delta^2\Psi\right|
\le
n^2C_4\lambda^4e^{\lambda\psi}=C\lambda^4e^{\lambda\psi}
\]
after redefining the constant.

On the other hand, by \eqref{eq:grad-psi-lower} and \eqref{eq:grad-psi-phi},
\begin{equation}
D^2\Psi[\nabla\Psi,\nabla\Psi]
=
e^{\lambda\psi}
\left(
2\lambda|\nabla\Psi|^2
+
\lambda^2|\nabla\psi\cdot\nabla\Psi|^2
\right)
\ge
\lambda^4e^{3\lambda\psi}|\nabla\psi|^4
\ge
16\rho_0^4\lambda^4e^{3\lambda\psi}.
\label{eq:zero-coercivity}
\end{equation}

Consequently, by \eqref{eq:grad-lower} and \eqref{eq:hessian-lower}, the three principal terms in \eqref{eq:gradient-identity}, \eqref{eq:zero-identity} and \eqref{eq:initial-identity} satisfy
\begin{align}
4d_1^2s
\int_{Q_\tau}
D^2\Psi[\nabla y,\nabla y]\,dx\,dt
&\ge
8d_1^2s\lambda
\int_{Q_\tau}
e^{\lambda\psi}|\nabla y|^2\,dx\,dt,
\label{eq:principal-grad}\\
4d_1^2s^3
\int_{Q_\tau}
D^2\Psi[\nabla\Psi,\nabla\Psi]y^2\,dx\,dt
&\ge
64d_1^2\rho_0^4s^3\lambda^4
\int_{Q_\tau}
e^{3\lambda\psi}y^2\,dx\,dt,
\label{eq:principal-zero}\\
c_1d_1s^2
\int_\Omega
|\nabla\Psi|^2|y(x,0)|^2\,dx
&\ge
4c_1d_1\rho_0^2s^2\lambda^2
\int_\Omega
e^{2\lambda\psi}|y(x,0)|^2\,dx.
\label{eq:principal-initial}
\end{align}

The negative term containing $\Delta^2\Psi$ in \eqref{eq:gradient-identity} is estimated by
\[
d_1^2s
\int_{Q_\tau}
|\Delta^2\Psi|y^2\,dx\,dt
\le
Cd_1^2s\lambda^4
\int_{Q_\tau}
e^{\lambda\psi}y^2\,dx\,dt.
\]
By comparison with \eqref{eq:principal-zero}, this is absorbed once $s\ge s_0$ for $s_0$ sufficiently large, since $e^{\lambda\psi}\le e^{3\lambda\psi}$.

All terms involving $p$ are lower order. Indeed, since
\[
|p|\le M,
\qquad
|\nabla p|\le M,
\]
we have, for example,
\begin{align}
2\left|\int_{Q_\tau}AG\,dx\,dt\right|
&\le
C_M\int_{Q_\tau}|\nabla y|^2\,dx\,dt
+
C_M\int_{Q_\tau}|y|^2\,dx\,dt,\\
2\left|\int_{Q_\tau}BG\,dx\,dt\right|
&\le
C_Ms^2\lambda^2
\int_{Q_\tau}
e^{2\lambda\psi}y^2\,dx\,dt,\\
2\left|\int_{Q_\tau}EG\,dx\,dt\right|
&\le
\varepsilon s\lambda
\int_{Q_\tau}
e^{\lambda\psi}|\nabla y|^2\,dx\,dt
+
C_{\varepsilon,M}s\lambda^3
\int_{Q_\tau}
e^{3\lambda\psi}y^2\,dx\,dt.
\end{align}
The first term is absorbed by \eqref{eq:principal-grad}, while the second terms are absorbed by \eqref{eq:principal-zero} by taking $\lambda$ and then $s$ sufficiently large.

The terms containing $C=c_1s\kappa y$ are also lower order in $s$ relative to the $s^3$ coercivity. For example,
\[
2|BC|
\le
\varepsilon |B|^2
+
C_\varepsilon |C|^2,
\]
and
\[
|C|^2
\le
C s^2\kappa^2y^2.
\]
Since $\kappa$ is fixed before $s$ is chosen, this is absorbed by the $s^3$ zero-order term for sufficiently large $s$.

Likewise,
\[
2|CE|
\le
\varepsilon s\lambda e^{\lambda\psi}|\nabla y|^2
+
C_{\varepsilon,\kappa}s^3\lambda^3e^{3\lambda\psi}y^2,
\]
and the two resulting terms are absorbed by the principal gradient and zero-order terms.

The terms involving $D=c_1y_t$ are treated by retaining a fixed fraction of the positive square
\[
\int_{Q_\tau}|D|^2\,dx\,dt
=
c_1^2\int_{Q_\tau}|y_t|^2\,dx\,dt
\]
on the right-hand side. In particular, the elementary inequality
\[
2|XY|
\le
\varepsilon X^2+\varepsilon^{-1}Y^2
\]
is applied to every mixed term containing $D$. Thus, choosing $\varepsilon$ sufficiently small, no uncontrolled $y_t$ term remains on the left-hand side.

The negative terminal contribution in \eqref{eq:initial-identity}, together with the terminal contributions generated by the terms containing $C$ and $D$, is bounded by
\begin{equation}
\label{eq:terminal-carleman-bound}
Cs^2\lambda^2
\int_\Omega
e^{-2s\varphi(x,\tau)}
|z(x,\tau)|^2\,dx.
\end{equation}

Finally, on $\Sigma_\tau$,
\[
\partial_\nu y
=
e^{-s\varphi}\partial_\nu z
\]
because $z=0$ on $\Sigma_\tau$. Also,
\[
|\partial_\nu\Psi|
\le
|\nabla\Psi|
\le
C\lambda e^{\lambda\psi_+}.
\]
Therefore the boundary term in \eqref{eq:gradient-identity} is bounded by
\[
Cs\lambda e^{\lambda\psi_+}
\int_{\Sigma_\tau}
e^{-2s\varphi}
|\partial_\nu z|^2\,d\sigma\,dt.
\]

Collecting all the estimates and choosing first $\lambda\ge\lambda_0$ (so that we can have a lower bound for terms of the form $e^{k\lambda\psi}>0$) and then $s\ge s_0(\lambda)$ sufficiently large yields
\eqref{eq:carleman}.

\end{proof}

\subsubsection{Step 3: Absorption of the terminal term}

The Carleman estimate \eqref{eq:carleman} contains a terminal term at \(t=\tau\) that cannot be discarded directly. However, using an energy estimate for \(w\), we can show that this terminal term is controlled by the initial trace, which is precisely what we need to estimate.

Recall that $w^{[i]}$ satisfies
\[c_1\partial_tw^{[i]}-d_1\Delta w^{[i]}-p_1w^{[i]}
=
\pi \partial_tu^{[i]}_2+\chi\partial_tu_0^{[i]}(x,t-\tau)\]
Multiplying by $w^{[i]}$ and integrating over $\Omega$, using
\[
w^{[i]}=0
\qquad\text{on }\partial\Omega,
\]
gives, for each fixed $t$,
\begin{equation}
\frac{c_1}{2}\frac{d}{dt}\|w^{[i]}(t)\|_{L^2(\Omega)}^2
+
d_1\|\nabla w^{[i]}(t)\|_{L^2(\Omega)}^2
=
\int_\Omega p_1|w^{[i]}|^2\,dx
+
\int_\Omega \left[\pi \partial_tu^{[i]}_2+\chi\partial_tu_0^{[i]}(x,t-\tau)\right]w^{[i]}\,dx.
\end{equation}
By the a priori bounds \eqref{eq:pq-bound} and \eqref{eq:ut-bound}, and Young's inequality,
we obtain
\begin{equation}
\begin{aligned}
\frac{c_1}{2}\frac{d}{dt}\|w^{[i]}(t)\|_{L^2}^2
+
d_1\|\nabla w^{[i]}(t)\|_{L^2}^2
&\le
M\|w^{[i]}(t)\|_{L^2}^2
+
M_1\|\pi\|_{L^2}\|w^{[i]}(t)\|_{L^2}
+ 
M_1\|\chi\|_{L^2}\|w^{[i]}(t)\|_{L^2}\\
&\le
\left(
M+\frac{2M_1^2}{c_1}
\right)
\|w^{[i]}(t)\|_{L^2}^2
+
\frac{c_1}{4}\|\pi\|_{L^2}^2
+
\frac{c_1}{4}\|\chi\|_{L^2}^2
\end{aligned}
\end{equation}
for each fixed $t$. 
Since 
\[\frac{c_1}{2}\frac{d}{dt}\|w^{[i]}(t)\|_{L^2}^2
+
d_1\|\nabla w^{[i]}(t)\|_{L^2}^2
\ge \frac{c_1}{2}\frac{d}{dt}\|w^{[i]}(t)\|_{L^2}^2,
\]
by Grönwall's inequality,
\[
\|w^{[i]}(t)\|_{L^2(\Omega)}^2
\le
C_E
\left(
\|w^{[i]}(\cdot,0)\|_{L^2(\Omega)}^2
+
\|\pi\|_{L^2(\Omega)}^2
+
\|\chi\|_{L^2(\Omega)}^2
\right),
\qquad 0\le t\le\tau.
\]
Consequently,
\begin{equation}
\label{eq:w-energy}
\sup_{0\le t\le\tau}
\|w^{[i]}(t)\|_{L^2(\Omega)}^2
\le
C_E
\left(
\|w^{[i]}(\cdot,0)\|_{L^2(\Omega)}^2
+
\|\pi\|_{L^2(\Omega)}^2
+
\|\chi\|_{L^2(\Omega)}^2
\right),
\end{equation}
where
\[
C_E=C_E(c_1,M,M_1,\tau).
\]

Using \eqref{eq:r-w0},
\[\|\pi\|_{L^2(\Omega)}^2+\|\chi\|_{L^2(\Omega)}^2
\le
\frac{c_1^2}{\mu_0^2}
\sum_{i=1}^2
\|w^{[i]}(\cdot,0)\|_{L^2(\Omega)}^2.\]
together with the uniform boundedness of the prescribed histories, we obtain
\[
\begin{aligned}
\sup_{0\le t\le\tau}
\sum_{i=1}^2\|w^{[i]}(t)\|_{L^2(\Omega)}^2
&\le
C_E
\left(
1+\frac{c_1^2}{\mu_0^2}
\right)
\sum_{i=1}^2\|w^{[i]}(\cdot,0)\|_{L^2(\Omega)}^2.
\end{aligned}
\]
In particular, taking $t=\tau$ gives
\[
\sum_{i=1}^2\|w^{[i]}(\cdot,\tau)\|_{L^2(\Omega)}^2
\le
C_E
\left(
1+\frac{c_1^2}{\mu_0^2}
\right)
\sum_{i=1}^2\|w^{[i]}(\cdot,0)\|_{L^2(\Omega)}^2.
\]
Thus, defining
\[
C_E'
:=
C_E
\left(
1+\frac{c_1^2}{\mu_0^2}
\right),
\]
we obtain
\begin{equation}
\label{eq:terminal-energy}
\sum_{i=1}^2\|w^{[i]}(\cdot,\tau)\|_{L^2(\Omega)}^2
\le
C_E'
\sum_{i=1}^2\|w^{[i]}(\cdot,0)\|_{L^2(\Omega)}^2.
\end{equation}

By \eqref{eq:phi},
\[
\varphi(x,t)=\kappa t-\Psi(x),
\]
we have
\[
\varphi(x,\tau)=\kappa\tau-\Psi(x),
\]
and hence
\[
e^{-2s\varphi(x,\tau)}
=
e^{-2s\kappa\tau}e^{2s\Psi(x)}.
\]
Since
\[
\Psi(x)\leq\Psi_+
\qquad\text{for all }x\in\overline{\Omega},
\]
it follows that
\[
e^{2s\Psi(x)}
\le
e^{2s\Psi_+}.
\]
Therefore,
\[
\begin{aligned}
\int_\Omega
e^{-2s\varphi(x,\tau)}
|w^{[i]}(x,\tau)|^2\,dx
&=
e^{-2s\kappa\tau}
\int_\Omega
e^{2s\Psi(x)}
|w^{[i]}(x,\tau)|^2\,dx
\\
&\le
e^{-2s\kappa\tau}
e^{2s\Psi_+}
\int_\Omega
|w^{[i]}(x,\tau)|^2\,dx
\\
&=
e^{-2s\kappa\tau}
e^{2s\Psi_+}
\|w^{[i]}(\cdot,\tau)\|_{L^2(\Omega)}^2.
\end{aligned}
\]
Using \eqref{eq:terminal-energy}, we consequently obtain
\[
\begin{aligned}
\sum_{i=1}^2\int_\Omega
e^{-2s\varphi(x,\tau)}
|w^{[i]}(x,\tau)|^2\,dx
&\le
C_E'
e^{-2s\kappa\tau}
e^{2s\Psi_+}
\sum_{i=1}^2\|w^{[i]}(\cdot,0)\|_{L^2(\Omega)}^2.
\end{aligned}
\]

On the other hand, using \eqref{eq:phi} at $t=0$,
\[
\int_\Omega
e^{-2s\varphi(x,0)}
|w^{[i]}(x,0)|^2\,dx
=
\int_\Omega
e^{2s\Psi(x)}
|w^{[i]}(x,0)|^2\,dx
\ge
e^{2s\Psi_-}
\|w^{[i]}(\cdot,0)\|_{L^2}^2.
\]
Therefore, by the linearity of the Lebesgue integral for a finite sum,
\begin{equation}
\label{eq:terminal-absorption}
\sum_{i=1}^2\int_\Omega
e^{-2s\varphi(x,\tau)}
|w^{[i]}(x,\tau)|^2\,dx
\le
C_E'
e^{-2s[\kappa\tau-(\Psi_+-\Psi_-)]}
\int_\Omega
e^{-2s\varphi(x,0)}
\sum_{i=1}^2|w^{[i]}(x,0)|^2\,dx.
\end{equation}

By \eqref{eq:kappa-choice},
\[
\kappa\tau-(\Psi_+-\Psi_-)>0.
\]
Thus, after increasing $s_0$ if necessary,
\begin{equation}
\label{eq:terminal-absorbed-final}
Cs^2\lambda^2
\sum_{i=1}^2\int_\Omega
e^{-2s\varphi(x,\tau)}
|w^{[i]}(x,\tau)|^2\,dx
\le
\frac12
s^2\lambda^2
\int_\Omega
e^{-2s\varphi(x,0)}
\sum_{i=1}^2|w^{[i]}(x,0)|^2\,dx.
\end{equation}

\subsubsection{Step 4: Application of the Carleman estimate to the original problem}

Applying the Carleman estimate Lemma \ref{lem:carleman} to $w^{[i]}=\partial_tv^{[i]}$ for $i=1,2$ gives
\begin{equation}
\begin{aligned}
&\quad s\lambda
\int_{Q_\tau}
e^{-2s\varphi}
|\nabla w^{[i]}|^2\,dx\,dt
+
s^3\lambda^4
\int_{Q_\tau}
e^{-2s\varphi}
|w^{[i]}|^2\,dx\,dt
+
s^2\lambda^2
\int_\Omega
e^{-2s\varphi(x,0)}
|w^{[i]}(x,0)|^2\,dx
\\&
\le
C
\int_{Q_\tau}
e^{-2s\varphi}
|P_{p_1}w^{[i]}|^2\,dx\,dt
+
Cs\lambda e^{\lambda\psi_+}
\int_{\Sigma_\tau}
e^{-2s\varphi}
|\partial_\nu w^{[i]}|^2\,d\sigma\,dt
\\&\qquad+
Cs^2\lambda^2
\int_\Omega
e^{-2s\varphi(x,\tau)}
|w^{[i]}(x,\tau)|^2\,dx.
\end{aligned}
\label{eq:carleman-w}
\end{equation}

Since
\[
P_{p_1}w^{[i]}=\pi \partial_tu^{[i]}_2+\chi\partial_tu_0^{[i]}(x,t-\tau),
\]
we obtain
\begin{equation}
\begin{aligned}
&s\lambda
\int_{Q_\tau}
e^{-2s\varphi}
|\nabla w^{[i]}|^2\,dx\,dt
+
s^3\lambda^4
\int_{Q_\tau}
e^{-2s\varphi}
|w^{[i]}|^2\,dx\,dt
+
s^2\lambda^2
\int_\Omega
e^{-2s\varphi(x,0)}
|w^{[i]}(x,0)|^2\,dx
\\
&\le
C
\int_{Q_\tau}
e^{-2s\varphi}
\left|\pi \partial_tu^{[i]}_2+\chi\partial_tu_0^{[i]}(x,t-\tau)\right|^2\,dx\,dt
\\&\qquad
+
Cs\lambda e^{\lambda\psi_+}
\int_{\Sigma_\tau}
e^{-2s\varphi}
|\partial_\nu w^{[i]}|^2\,d\sigma\,dt
+
Cs^2\lambda^2
\int_\Omega
e^{-2s\varphi(x,\tau)}
|w^{[i]}(x,\tau)|^2\,dx.
\end{aligned}
\label{eq:carleman-w-source}
\end{equation}

By the terminal estimate established in \eqref{eq:terminal-absorbed-final} above,
\begin{equation}
\label{eq:terminal-absorbed-w}
Cs^2\lambda^2
\int_\Omega
e^{-2s\varphi(x,\tau)}
|w^{[i]}(x,\tau)|^2\,dx
\le
\frac12
s^2\lambda^2
\int_\Omega
e^{-2s\varphi(x,0)}
|w^{[i]}(x,0)|^2\,dx.
\end{equation}

Substituting \eqref{eq:terminal-absorbed-w} into \eqref{eq:carleman-w-source} and moving the resulting term to the left-hand side yields
\begin{equation}
\begin{aligned}
&\quad
s\lambda
\int_{Q_\tau}
e^{-2s\varphi}
|\nabla w^{[i]}|^2\,dx\,dt
+
s^3\lambda^4
\int_{Q_\tau}
e^{-2s\varphi}
|w^{[i]}|^2\,dx\,dt
+
\frac12
s^2\lambda^2
\int_\Omega
e^{-2s\varphi(x,0)}
|w^{[i]}(x,0)|^2\,dx
\\
&\le
C
\int_{Q_\tau}
e^{-2s\varphi}
\left|\pi \partial_tu^{[i]}_2+\chi\partial_tu_0^{[i]}(x,t-\tau)\right|^2\,dx\,dt
+
Cs\lambda e^{\lambda\psi_+}
\int_{\Sigma_\tau}
e^{-2s\varphi}
|\partial_\nu w^{[i]}|^2\,d\sigma\,dt.
\end{aligned}
\label{eq:carleman-w-absorbed}
\end{equation}

Finally, since the first two terms on the left-hand side of
\eqref{eq:carleman-w-absorbed} are nonnegative, we may discard
them. Thus,
\begin{equation}
\begin{aligned}
&\quad\frac12
s^2\lambda^2
\int_\Omega
e^{-2s\varphi(x,0)}
|w^{[i]}(x,0)|^2\,dx
\\&\le
C
\int_{Q_\tau}
e^{-2s\varphi}
\left|\pi \partial_tu^{[i]}_2+\chi\partial_tu_0^{[i]}(x,t-\tau)\right|^2\,dx\,dt
+
Cs\lambda e^{\lambda\psi_+}
\int_{\Sigma_\tau}
e^{-2s\varphi}
|\partial_\nu w^{[i]}|^2\,d\sigma\,dt.
\end{aligned}
\label{eq:carleman-application-half}
\end{equation}

Absorbing the factor $\frac12$ into the generic constant $C$, and taking the sum for $i=1,2$, by the linearity of the Lebesgue integral for a finite sum, we obtain
\begin{equation}
\begin{aligned}
&\quad s^2\lambda^2
\sum_{i=1}^{2}\int_\Omega
e^{-2s\varphi(x,0)}
|w^{[i]}(x,0)|^2\,dx
\\&\le
C\sum_{i=1}^{2}
\int_{Q_\tau}
e^{-2s\varphi}
\left|\pi \partial_tu^{[i]}_2+\chi\partial_tu_0^{[i]}(x,t-\tau)\right|^2\,dx\,dt
+
Cs\lambda e^{\lambda\psi_+}
\sum_{i=1}^{2}\int_{\Sigma_\tau}
e^{-2s\varphi}
|\partial_\nu w^{[i]}|^2\,d\sigma\,dt.
\end{aligned}
\label{eq:carleman-application}
\end{equation}

We now estimate the source term. By the assumed uniform bounds on $u^{[i]}_2$ and $u_0^{[i]}(x,t-\tau)$ in \eqref{eq:ut-bound},
\[
\sum_{i=1}^{2}
\left|\pi \partial_tu^{[i]}_2+\chi\partial_tu_0^{[i]}(x,t-\tau)\right|^2
\le
C_{M_1}
\left(
|\pi|^2+|\chi|^2
\right),
\]
for some constant $C_{M_1}$ depending on $M_1$. 
Hence, 
we obtain
\begin{equation}
\sum_{i=1}^{2}\int_{Q_\tau}
e^{-2s\varphi}
\left|\pi \partial_tu^{[i]}_2+\chi\partial_tu_0^{[i]}(x,t-\tau)\right|^2\,dx\,dt
\le
C_{M_1}^2
\int_\Omega
\left(
|\pi|^2+|\chi|^2
\right)
\left(
\int_0^\tau
e^{-2s\varphi(x,t)}\,dt
\right)dx.
\end{equation}

Since
\[
\varphi(x,t)=\varphi(x,0)+\kappa t,
\]
we have
\begin{equation}
\int_0^\tau e^{-2s\varphi(x,t)}\,dt
=
e^{-2s\varphi(x,0)}
\int_0^\tau e^{-2s\kappa t}\,dt
\le
\frac{1}{2s\kappa}
e^{-2s\varphi(x,0)}.
\end{equation}
Thus
\begin{equation}
\label{eq:source-weighted}
\sum_{i=1}^{2}\int_{Q_\tau}
e^{-2s\varphi}
\left|\pi \partial_tu^{[i]}_2+\chi\partial_tu_0^{[i]}(x,t-\tau)\right|^2\,dx\,dt
\le
\frac{C_{M_1}^2}{2s\kappa}
\int_\Omega
e^{-2s\varphi(x,0)}
\left(
|\pi|^2+|\chi|^2
\right)\,dx.
\end{equation}

By \eqref{eq:r-w0},
\[
|\pi(x)|^2+|\chi(x)|^2
\le
\frac{c_1^2}{\mu_0^2}
\sum_{i=1}^2
|w^{[i]}(x,0)|^2.
\]
Consequently,
\begin{equation}
\sum_{i=1}^{2}\int_{Q_\tau}
e^{-2s\varphi}
\left|\pi \partial_tu^{[i]}_2+\chi\partial_tu_0^{[i]}(x,t-\tau)\right|^2\,dx\,dt
\le
\frac{C_{M_1}^2c_1^2}
{2s\kappa\mu_0^2}
\sum_{i=1}^2\int_\Omega
e^{-2s\varphi(x,0)}
|w^{[i]}(x,0)|^2\,dx.
\label{eq:source-absorption}
\end{equation}

Choose $s$ still larger, if necessary, so that
\[
\frac{CC_{M_1}^2c_1^2}
{2s\kappa\mu_0^2}
\le
\frac12s^2\lambda^2.
\]
Equivalently, it is enough that
\[
s^3\lambda^2
\ge
\frac{CC_{M_1}^2c_1^2}
{\kappa\mu_0^2}.
\]
Then the source term is absorbed into the left-hand side of
\eqref{eq:carleman-application}, and we obtain
\begin{equation}
\label{eq:w0-weighted-final}
\sum_{i=1}^{2}\int_\Omega
e^{-2s\varphi(x,0)}
|w^{[i]}(x,0)|^2\,dx
\le
C
\frac{e^{\lambda\psi_+}}{s\lambda}
\sum_{i=1}^{2}\int_{\Sigma_\tau}
e^{-2s\varphi}
|\partial_\nu w^{[i]}|^2\,d\sigma\,dt.
\end{equation}

\subsubsection{Step 5: Removal of the weights}
Finally, we remove the exponential weights to obtain an unweighted estimate. 

Since
\[
\varphi(x,0)=-\Psi(x),
\]
we have
\[
e^{-2s\varphi(x,0)}
=
e^{2s\Psi(x)}
\ge
e^{2s\Psi_-}.
\]
Therefore, for $i=1,2$,
\begin{equation}
\label{eq:left-unweighted}
e^{2s\Psi_-}
\|w^{[i]}(\cdot,0)\|_{L^2(\Omega)}^2
\le
\int_\Omega
e^{-2s\varphi(x,0)}
|w^{[i]}(x,0)|^2\,dx.
\end{equation}

On the other hand, for $0<t<\tau$,
\[
e^{-2s\varphi(x,t)}
=
e^{2s\Psi(x)-2s\kappa t}
\le
e^{2s\Psi_+},
\]
so, for $i=1,2$,
\begin{equation}
\label{eq:right-unweighted}
\int_{\Sigma_\tau}
e^{-2s\varphi}
|\partial_\nu w^{[i]}|^2\,d\sigma\,dt
\le
e^{2s\Psi_+}
\|\partial_\nu w^{[i]}\|_{L^2(\Sigma_\tau)}^2.
\end{equation}

Combining \eqref{eq:w0-weighted-final},
\eqref{eq:left-unweighted}, and
\eqref{eq:right-unweighted}, and summing over $i=1,2$, by the linearity of the Lebesgue integral for a finite sum, we obtain
\begin{equation}
\label{eq:w0-boundary}
\sum_{i=1}^{2}\|w^{[i]}(\cdot,0)\|_{L^2(\Omega)}
\le
C_f
\sum_{i=1}^{2}\|\partial_\nu w^{[i]}\|_{L^2(\Sigma_\tau)},
\end{equation}
where
\[
C_f
=
C^{1/2}
(s\lambda)^{-1/2}
e^{\lambda\psi_+/2}
e^{s(\Psi_+-\Psi_-)}.
\]
The parameters $\lambda$, $s$, and $\kappa$ are fixed entirely in terms of the a priori data, so $C_f$ is a finite a priori constant.

For $i=1,2$, since
\[
w^{[i]}=\partial_\nu v^{[i]},
\]
we have, by the regularity of $u_j^{[i]}$, $i,j=1,2$,
\[
\partial_\nu w^{[i]}
=
\partial_\nu \partial_t v^{[i]}
=
\partial_t\partial_\nu v^{[i]}.
\]
Because
\[
v^{[i]}=u_1^{[i]}-u_2^{[i]},
\]
it follows that
\[
\partial_\nu w^{[i]}
=
\partial_t\partial_\nu u_1^{[i]}
-
\partial_t\partial_\nu u_2^{[i]}.
\]
Thus
\begin{equation}
\label{eq:w0-observation-final}
\sum_{i=1}^{2}\|w^{[i]}(\cdot,0)\|_{L^2(\Omega)}
\le
C_f
\sum_{i=1}^{2}\left\|
\partial_t\partial_\nu u_1^{[i]}
-
\partial_t\partial_\nu u_2^{[i]}
\right\|_{L^2(\Sigma_\tau)}.
\end{equation}

\subsubsection{Step 6: Completion of the proof}

From \eqref{eq:r-w0},
\[|\pi(x)|^2+|\chi(x)|^2
\le
\frac{c_1^2}{\mu_0^2}
\sum_{i=1}^2
|w^{[i]}(x,0)|^2.
\]
Taking the integral over all $x\in\Omega$ and applying \eqref{eq:w0-observation-final}, we have
\begin{equation}\label{eq:pi-chi-stability-final}
\|\pi\|_{L^2(\Omega)}+\|\chi\|_{L^2(\Omega)}
\le
\frac{c_1C_f}{\mu_0}
\sum_{i=1}^2\left\|
\partial_t\partial_\nu u_1
-
\partial_t\partial_\nu u_2
\right\|_{L^2(\Sigma_\tau)}.
\end{equation}
Since $\pi=p_1-p_2$ and $\chi=q_1-q_2$, this is exactly \eqref{eq:final-stability}.

Finally, suppose that
\[
\partial_\nu u_1^{[i]}
=
\partial_\nu u_2^{[i]}
\qquad\text{on }\Sigma_\tau,
\qquad i=1,2.
\]
Then,
\[
\partial_\nu v^{[i]}=0
\qquad\text{on }\Sigma_\tau,
\]
and hence, by the time regularity,
\[
\partial_\nu w^{[i]}
=
\partial_t\partial_\nu v^{[i]}
=0
\qquad\text{on }\Sigma_\tau.
\]
It follows from \eqref{eq:pi-chi-stability-final} that
\[
\|\pi\|_{L^2(\Omega)}^2
+
\|\chi\|_{L^2(\Omega)}^2
=0.
\]
Thus
\[
\pi=\chi=0
\qquad\text{a.e. in }\Omega.
\]
Therefore, we conclude that
\[
p_1=p_2,
\qquad
q_1=q_2
\qquad\text{a.e. in }\Omega.
\]
The proof is complete.
\qed

\begin{remark}[The delayed diffusion coefficient $d_2$]
The coefficient $d_2$ may be arbitrary. Indeed, the difference of the two solutions satisfies \eqref{eq:v-initial-layer}. 
Thus the delayed diffusion term disappears identically from the equation on the entire interval on which the Carleman estimate is applied. The proof therefore requires no condition on $d_2$.
\end{remark}

\begin{remark}[No smallness assumption on the delay]
The only restriction on the delay is
\(0<\tau<T.\)
The Carleman parameter $\kappa$ is chosen after $\tau$ so that
\[
\kappa\tau>\Psi_+-\Psi_-.
\]
Since $\kappa$ is a free Carleman parameter, this condition can always be fulfilled for every positive $\tau$.
Thus the proof contains no hidden assumption of the form \(\tau<\tau_0.\)
\end{remark}

\begin{remark}[Why $p,q$ must be independent of time]
The proof differentiates $p,q$ in \eqref{eq:w-equation}
If instead $p=p(x,t)$ and $q=q(x,t)$, additional terms
would appear. The present proof therefore does not establish the theorem for time-dependent zeroth-order coefficients.

\end{remark}

\begin{remark}[The nondegeneracy condition]
The identity
\[
c_1w^{[i]}(x,0)
=
r(x)u_2^{[i]}(x,0)+su_0^{[i]}(x,-\tau)
\]
is the mechanism by which the unknown coefficient difference is converted into an initial trace. Therefore the uniform nondegeneracy condition \eqref{eq:pqnondeg} is essential for the stated global Lipschitz estimate.

In the case where $q$ is known and we only require the stability for $p$, this condition reduces to 
\[
u_0(\cdot,0)\in W^{2,\infty}(\Omega)
\]
and there exists $\kappa_0>0$ such that
\begin{equation}
\label{eq:nondegenerate}
|u_0(x,0)|\ge\kappa_0
\qquad
\text{for every }x\in\overline\Omega.
\end{equation}

If the initial value vanishes on a subset of $\Omega$, the same argument can at most determine $p_1-p_2$ and $q_1-q_2$ on the subset where $u_0(\cdot,0)$ is bounded away from zero.

\end{remark}

\begin{remark}
The proof does not require a Carleman estimate for the delayed operator on the full cylinder $Q_T$. The delay enters only through the forward problem and disappears from the difference equation on $Q_\tau$.

\end{remark}

\begin{remark}[Remark on the necessity of the initial layer]

The restriction to the time interval $(0,\tau)$ -- the initial layer -- is not an arbitrary technical choice but a structural necessity for the proof. 

The inverse stability argument relies on two key facts that hold only on $(0,\tau)$:

\begin{enumerate}
\item For any $0<t<\tau$, the shifted time $t-\tau$ lies in the history interval $[-\tau,0]$. Because the two solutions share identical history data $u_0$, their difference $v$ vanishes identically on this interval. Consequently, the delayed terms
\[
d_2\Delta v(x,t-\tau)
\quad\text{and}\quad
q(x)v(x,t-\tau)
\]
are exactly zero in the difference equation.

\item This exact cancellation is what allows the reduction to a non-delayed parabolic equation
\begin{equation}
c_1v_t-d_1\Delta v-p_1v=ru_2+su_0 \qquad\text{in }Q_\tau.
\label{eq:non-delayed-reduction}
\end{equation}
The subsequent Carleman estimate is designed precisely for this non-delayed operator.
\end{enumerate}

If one attempted to work on any interval beyond $\tau$, the delayed terms would no longer vanish pointwise (since $t-\tau>0$), and the difference equation would retain its full delayed structure:
\begin{equation}
c_1v_t-d_1\Delta v-d_2\Delta v(x,t-\tau)-p_1v-qv(x,t-\tau)=ru_2+su_0.
\end{equation}
The present Carleman estimate, which differentiates in time and relies on $p$ being time-independent, would not apply to such a delayed equation without significant additional assumptions (e.g., smallness of $d_2$ or a different Carleman weight adapted to the delay).

Thus, the initial layer is necessary precisely because it is the only window where the delay is ``invisible" in the difference equation, allowing the inverse problem to be recast as a classical (non-delayed) inverse source problem for the initial trace $w(\cdot,0)$.




\end{remark}

\section{Discussion and Future Directions}

This work addresses an inverse problem for a broad class of linear reaction--diffusion equations incorporating multiple delayed mechanisms: delayed diffusion, delayed time derivatives, and delayed reaction terms. We aim to simultaneously recover an unknown delay parameter $\tau>0$ and spatially varying reaction coefficients using only boundary flux measurements --- a setting motivated by the widespread inaccessibility of internal state observations in biological systems.

Our results carry key implications for biological modeling and monitoring. The ability to infer delay time from boundary flux alone is particularly valuable for ecological and epidemiological applications. For instance, species maturation delays can be estimated from individual fluxes across reserve boundaries without invasive internal sampling, and pathogen incubation periods can be recovered from case time series at monitoring region boundaries.

The initial-layer coefficient recovery also has practical relevance. The interval $(0,\tau)$, corresponding to the period before delayed feedback from the system's own dynamics takes effect, constitutes an optimal window for parameter estimation. In practice, early-time boundary measurements --- before the delay mechanism is fully engaged --- contain sufficient information to determine local growth and interaction rates. This finding can guide experimental design: concentrating measurements in the early phase maximizes the information gain about unknown parameters.

The Lipschitz stability estimate further reinforces the practical viability of our approach. It quantifies the sensitivity of reconstructed coefficients to boundary data perturbations, theoretically guaranteeing that small measurement errors do not lead to disproportionately large errors in recovered parameters --- a critical property for inversion methods applied to noisy real-world data.

\subsection{Concluding Remarks}

In summary, we have developed a complete analytical framework for solving inverse problems in delayed reaction--diffusion equations using only boundary flux measurements. The combination of singularity detection, method-of-steps reduction, and Carleman estimates effectively identifies both the delay parameter and spatially varying reaction coefficients, with a quantitative stability result established in a principal special case. While limitations remain regarding nonlinearities, unknown diffusion parameters, and multiple delays, this work lays a solid foundation for further theoretical and numerical developments.

More broadly, this study helps bridge the gap between mathematical inverse theory and practical biological monitoring. By demonstrating the feasibility of non-invasive parameter identification in principle, we hope to stimulate research into efficient algorithms and experimental protocols that translate these theoretical insights into actionable tools for conservation, epidemiology, and systems biology. The interface between delayed dynamics and inverse problems remains rich with open questions, and the approach presented here offers a useful starting point for exploring this promising area.

\medskip

\noindent\textbf{Acknowledgment.} 
	The work of M. Ding is supported by the National Natural Science Foundation of China (No. 1250012409) and Fundamental Research Funds for the Central Universities under Grant D5000240301. The work of H. Liu is partially supported by the Hong Kong RGC General Research Funds (No. 11303125, 11304224 and 11311122). The work of C. W. K. Lo is supported by the National Natural Science Foundation of China (No. 12501660), the Guangdong Province ``Pearl River" Talent Recruitment Program (Top youth talent) (No. 2024QN11X268), the Shenzhen Science and Technology Program (No. QNXMC20250701094200001) and the Guangdong Key Laboratory of Applied Mathematics and Artificial Intelligence.

\noindent\textbf{Conflict of interest statement.} 
	The authors declare that they have no conflict of interests that could have appeared to influence the work reported in this paper.

\noindent\textbf{Data availability statement.} 
We do not analyse or generate any datasets, because our work proceeds within a theoretical and mathematical approach.

\bibliographystyle{plain}
\bibliography{references}
\end{document}